\newtheorem{cro}{Corollary}[section]
\newtheorem{defn}{Definition}[section]
\newtheorem{thm}{Theorem}[section]
\newtheorem{lem}{Lemma}[section]
\begin{document}

\title{The Historic Set of Ergodic Averages in Some
Nonuniformly Hyperbolic Systems
 \footnotetext {* Corresponding author}
  \footnotetext {2010 Mathematics Subject Classification: 37B40, 28D20}}
\author{Zheng  Yin$^1$, Ercai Chen$^{1,2*}$, Xiaoyao Zhou$^{3}$ \\
 \small   1 School of Mathematical Sciences and Institute of Mathematics, Nanjing Normal University,\\
  \small   Nanjing 210023, Jiangsu, P.R.China\\
   \small   2 Center of Nonlinear Science, Nanjing University,\\
    \small   Nanjing 210093, Jiangsu, P.R.China\\
     \small  3 Department of Mathematics, University of Science and Technology of China,\\
      \small  Hefei, Anhui,230026, P.R.China\\
      \small    e-mail: zhengyinmail@126.com,\\
       \small    ecchen@njnu.edu.cn,\\
        \small    zhouxiaoyaodeyouxian@126.com\\
}
\date{}
\maketitle

\begin{center}
 \begin{minipage}{120mm}
{\small {\bf Abstract.} This article is devoted to the study of the historic set of  ergodic averages in some nonuniformly
hyperbolic systems. In particular, our results hold for the robust
classes of multidimensional nonuniformly expanding local
diffeomorphisms and Viana maps.}
\end{minipage}
 \end{center}

\vskip0.5cm {\small{\bf Keywords and phrases:} Historic set, Non-uniform Specification.}\vskip0.5cm
%%%%%%%%%%%%%%%%%%%%%%%%%%%%%%%%%%%%%%%%%%%%%%%%%%%%%%%%%%%%%%%%%%%%%%%%%%%%%%%%
%%%%%%%%%%%%%%%%%%%%%%%%%%%%%%%%%%%%%%%%%%%%%%%%%%%%%%%%%%%%%%%%%%%%%%%%%%%%%%%%
%%%%%%%%%%%%%%%%%%%%%%%%%%%%%%%%%%%%%%%%%%%%%%%%%%%%%%%%%%%%%%%%%%%%%%%%%%%%%%%%
%%%%%%%%%%%%%%%%%%%%%%%%%%%%%%%%%%%%%%%%%%%%%%%%%%%%%%%%%%%%%%%%%%%%%%%%%%%%%%%%
\section{Introduction and Preliminaries}
$(X,d,T)$ (or $(X,T)$ for short) is a topological dynamical system means that $(X,d)$ is a compact metric space together with a continuous self-map $T:X\to X.$
For a continuous function $\varphi:X\to\mathbb R, X$ can be divided into the following two parts:
\begin{align*}
X&=\bigcup\limits_{\alpha\in\mathbb R}\left\{x\in X:\lim\limits_{n\to\infty}\frac{1}{n}\sum\limits_{i=0}^{n-1}\varphi(T^ix)=\alpha\right\}\cup\left\{x\in X:\lim\limits_{n\to\infty}\frac{1}{n}\sum\limits_{i=0}^{n-1}\varphi(T^ix)\text{ does not exist }\right\}.
\end{align*}
The level set $\left\{x\in X:\lim\limits_{n\to\infty}\frac{1}{n}\sum\limits_{i=0}^{n-1}\varphi(T^ix)=\alpha\right\}$ is so-called multifractal decomposition sets of  ergodic averages of $\varphi$ in multifractal analysis.
There are
fruitful results about the descriptions of the structure (Hausdorff
dimension or topological entropy  or topological pressure) of these
level sets  in topological dynamical systems. Early
studies of the level sets was about their dimensions and topological
entropy. See Barreira \& Saussol \cite{BarSau}, Barreira, Saussol \&
Schmeling \cite{BarSauSch},
Olsen \cite{Ols}, Olsen \& Winter
\cite{OlsWin}, Takens \& Verbitskiy \cite{TakVer}, Zhou,
Chen \& Cheng \cite{ZhoCheChe} and Pfister \& Sullivan
\cite{PfiSul2}. Recently, the topological pressures of the level
sets has also been investigated. See Thompson \cite{Tho2}, Pei \&
Chen \cite{PeiChe1}
 and Zhou \& Chen \cite{ZhoChe}.

The set $\widehat{X}(\varphi,T):=\left\{x\in X:\lim\limits_{n\to\infty}\frac{1}{n}\sum\limits_{i=0}^{n-1}\varphi(T^ix)\text{ does not exist }\right\}$ is called the historic set of  ergodic averages of $\varphi.$ This terminology was introduced by Ruelle in \cite{Rue}. It is also called  non-typical points (see \cite{BarSch}), irregular set (see \cite{Tho1,Tho3}) and divergence points (see \cite{CheKupShu,Ols,OlsWin}).
 If this limit $\lim\limits_{n\to\infty}\frac{1}{n}\sum\limits_{i=0}^{n-1}\varphi(T^ix)$ does not exist, it follows that `partial averages' $\lim\limits_{n\to\infty}\frac{1}{n}\sum\limits_{i=0}^{n-1}\varphi(T^ix)$ keep change considerably so that their values  give information about the epoch to which $n$ belongs. The problem, whether there are persistent classes of smooth dynamical systems such that the set of initial states which
give rise to orbits with historic behavior has ¡®positive Lebesgue measure¡¯ was discussed by Ruelle in \cite{Rue} and  Takens
in \cite{Tak}.
 By Birkhoff's ergodic theorem, $\widehat{X}(\varphi,T)$ is not detectable from the point of view of an invariant measure, i.e., for any invariant measure $\mu,$
\begin{align*}
\mu(\widehat{X}(\varphi,T))=0.
\end{align*}
Hence, at first, the set $\widehat{X}(\varphi,T)$ has until recently been considered of little interest in dynamical systems and geometric measure theory.
However, recent work \cite{CheXio,FanFenWu,FenLauWu} has changed such attitudes. They have shown that in many cases the set can have full Hausdorff dimension, i.e.,
\begin{align*}
\dim_H(\widehat{X}(\varphi,T))=\dim_H(X).
\end{align*}
Barreira and Schmeling \cite{BarSch}  confirmed this in the uniformly hyperbolic setting in symbolic dynamics.  In 2005,  Chen, Kupper and Shu \cite{CheKupShu} proved that $\widehat{X}(\varphi,T)$
is either
empty or carries full entropy for maps with the specification property.  Thompson \cite{Tho1} extended it to topological
pressure
for maps with the specification property.
Zhou and Chen \cite{ZhoChe} also investigated  the multifractal analysis for the
historic set in topological dynamical systems  with g-almost product property.

Now, nonuniformly hyperbolic systems attract more and more
attentions. We refer the readers to  Barreira \& Pesin
\cite{BarPes}, Chung \&  Takahasi \cite{ChuTak},   Johansson,
Jordan,  Oberg \& Pollicott \cite{JohJorObePol},   Jordan \&   Rams
\cite{JorRam}, Liang, Liao, Sun \& Tian \cite{LiaLiaSUnTia},
Oliveira \cite{Oliv1}, Oliveira \& Viana \cite{OliVia}
and references therein for recent results in
nonuniformly hyperbolic systems. It is well known that the
specification property plays an important role in some uniformly
hyperbolic dynamical systems. The notion of specification is
slightly weaker than the one introduced by Bowen that requires any
finite sequence of pieces of orbit is well approximated by periodic
orbits. It implies that the dynamical systems have some mixing
property. One should mention that other mild forms of specification
were introduced by
 Pfister \& Sullivan \cite{PfiSul2} and Thompson \cite{Tho3} to the
study of multifractal formalism for Birkhoff averages associated to
beta-shifts, and by Pfister \& Sullivan \cite{PfiSul1}, Yamamoto
\cite{Yam2} and Varandas \cite{Var} to study large deviations.  This
article will use the weak form of specification introduced by
Varandas \cite{Var} in a nonuniformly hyperbolic context.

 Denote by $M(X)$ and $M(X,T)$ the set of all Borel probability
measures on $X$ and the collection of all $T$-invariant Borel
probability measures, respectively. It is well known that $M(X)$ and
$M(X,T)$  equipped with weak* topology are both convex, compact
spaces.

\begin{defn}{\rm\cite{Var}}
We say that $(T,m)$ satisfies the non-uniform specification property with a time lag of $p(x,n,\epsilon)$
if there exists $\delta>0$ such that for $m$-almost every $x$ and
every $0<\epsilon<\delta$ there exists an integer
$p(x,n,\epsilon)\geq1$ satisfying
\begin{align*}
 \lim\limits_{\epsilon\to0}\limsup\limits_{n\to\infty}\frac{1}{n}p(x,n,\epsilon)=0
  \end{align*}
and so that the following holds: given points $x_1,\cdots, x_k$ in a
full $m$-measure set and positive integers $n_1,\cdots,n_k,$ if
$p_i\geq p(x_i,n_i,\epsilon)$ then there exists $z$ that
$\epsilon$-shadows the orbits of each $x_i$ during $n_i$ iterates
with a time lag of $p(x_i,n_i,\epsilon)$ in between $T^{n_i}(x_i)$
and $x_{i+1},$ that is
\begin{align*}
z\in B_{n_1}(x_1,\epsilon) {\rm~ and~}
T^{n_1+p_1+\cdots+n_{i-1}+p_{i-1}}(z)\in B_{n_i}(x_i,\epsilon)
\end{align*}
for every $2\leq i\leq k,$ where
\begin{align*}
B_n(x,\epsilon)=\{y:d_n(x,y)<\epsilon\}:=\left\{y:\max\limits_{0\leq i\leq n-1}\{d(T^ix,T^iy)\}<\epsilon\right\}.
\end{align*}
\end{defn}
We  assume that the shadowing property holds on a set $K$ with a time lag $p(x,n,\epsilon)$
throughout the paper. From definition 1.1, we know
for any $x\in K$ and $0<\epsilon_1<\epsilon_2$, $p(x,n,\epsilon_1)\geq p(x,n,\epsilon_2)$.
Hence for any $x\in K$ and $\epsilon>0$, $\limsup_{n\to\infty}\frac{p(x,n,\epsilon)}{n}=0$.
Obviouly, $K$ is $T$-invariant. Let $M(K,T)$ denote the subset of $M(X,T)$ for which the measures $\mu$ satisfy $\mu(K)=1$
and $E(K,T)$ denote those which are ergodic.

\begin{defn}{\rm\cite{Pes}}
Suppose $Z\subset X$ be an arbitrary Borel set and $\psi\in C(X)$.
Let $\Gamma_{n}(Z,\epsilon)$ be the
collection of all finite or countable covers of $Z$ by sets of the
form $B_{m}(x,\epsilon),$ with $m\geq n$. Let $S_{n}\psi(x):=\sum_{i=0}^{n-1}\psi(T^{i}x)$. Set
\begin{align*}
M(Z,t,\psi,n,\epsilon):=\inf_{\mathcal{C}\in\Gamma_{n}(Z,\epsilon)}\left\{\sum_{B_{m}(x,\epsilon)\in
\mathcal{C}}\exp (-tm+\sup_{y\in
B_{m}(x,\epsilon)}S_{m}\psi(y))\right\},
\end{align*}
and
\begin{align*}
M(Z,t,\psi,\epsilon)=\lim_{n\to\infty}M(Z,t,\psi,n,\epsilon).
\end{align*}
Then there exists a unique number $P(Z,\psi,\epsilon)$ such that
$$P(Z,\psi,\epsilon)=\inf\{t:M(Z,t,\psi,\epsilon)=0\}=\sup\{t:M(Z,t,\psi,\epsilon)=\infty\}.$$
$P(Z,\psi)=\lim_{\epsilon\to0}P(Z,\psi,\epsilon)$ is called
the topological pressure of $Z$ with respect to $\psi$.
\end{defn}

It is  obvious that the following hold:

\begin{enumerate}
  \item[(1)] $P(Z_1,\psi)\leq P(Z_2,\psi)$ for any $Z_1\subset Z_2\subset X$;
  \item[(2)] $P(Z,\psi)=\sup_i P(Z_i,\psi)$, where $Z=\bigcup_i Z_i\subset X$.
\end{enumerate}

Now, we state the main result of this article as follows:
\begin{thm}
Let $(X,T)$ be a topological dynamical system. Assume $(T,m)$ satisfies non-uniform specification
property. Assume that $\varphi\in C(X)$ satisfies $\inf\limits_{\mu\in M(K,T)}\int\varphi d\mu
<\sup\limits_{\mu\in M(K,T)}\int\varphi d\mu$, then  $\widehat{X}(\varphi,T)\neq\emptyset$ and for all $\psi\in C(X)$,
\begin{align*}
P(\widehat{X}(\varphi,T),\psi)\geq\sup\left\{h_\mu+\int\psi d\mu:\mu\in M(K,T)\right\},
\end{align*}
where
\begin{align*}
\widehat{X}(\varphi,T)=\left\{x\in X:\lim\limits_{n\to\infty}\frac{1}{n}\sum\limits_{i=0}^{n-1} \varphi (T^ix) \text{ \rm does not exist}\right\}.
\end{align*}
If there exists $\mu\in M(K,T)$ such that $\mu$ is a equilibrium state for $T$ with respect to potential $\psi$,
then we have $P(\widehat{X}(\varphi,T),\psi)=P(X,\psi)$.
\end{thm}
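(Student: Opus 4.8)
The plan is to reduce the pressure estimate to a single fixed invariant measure and then build, by hand, a Cantor-type subset of $\widehat X(\varphi,T)$ that carries the required pressure, producing the oscillation of the Birkhoff averages of $\varphi$ and the entropy/pressure budget simultaneously by gluing generic orbit pieces through the non-uniform specification property. \emph{Reduction.} Fix $\mu\in M(K,T)$ and $\gamma>0$. Since $\inf_{\nu\in M(K,T)}\int\varphi\,d\nu<\sup_{\nu\in M(K,T)}\int\varphi\,d\nu$, pick $\nu_1,\nu_2\in M(K,T)$ with $\int\varphi\,d\nu_1\neq\int\varphi\,d\nu_2$ and set $\mu_j=(1-\epsilon)\mu+\epsilon\nu_j\in M(K,T)$ (using convexity of $M(K,T)$). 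Because metric entropy is affine and nonnegative and $\nu\mapsto\int\psi\,d\nu$ is affine, $h_{\mu_j}+\int\psi\,d\mu_j=(1-\epsilon)\big(h_\mu+\int\psi\,d\mu\big)+\epsilon\big(h_{\nu_j}+\int\psi\,d\nu_j\big)\ge(1-\epsilon)\big(h_\mu+\int\psi\,d\mu\big)-\epsilon\|\psi\|>h_\mu+\int\psi\,d\mu-\gamma$ for $\epsilon$ small, while $\int\varphi\,d\mu_1-\int\varphi\,d\mu_2=\epsilon\big(\int\varphi\,d\nu_1-\int\varphi\,d\nu_2\big)\neq0$. So it suffices to construct, for each small $\gamma$, a nonempty $F\subseteq\widehat X(\varphi,T)$ with $P(F,\psi)\ge h_\mu+\int\psi\,d\mu-O(\gamma)$; letting $\gamma\to0$ and taking the supremum over $\mu$ gives the first assertion, and $F\neq\emptyset$ gives $\widehat X(\varphi,T)\neq\emptyset$.

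\emph{Construction of $F$.} By Katok's entropy formula together with the ergodic decomposition (for possibly non-ergodic $\mu_j$) and Birkhoff's theorem, for each $j$, each small $\epsilon$ and each large $n$ there is an $(n,4\epsilon)$-separated set $Y_n^{(j)}$ contained in a full-$m$-measure subset of $K$ on which the shadowing of Definition 1.1 is licensed, with $\#Y_n^{(j)}\ge\exp\!\big(n(h_{\mu_j}-\gamma)\big)$, with $\big|\tfrac1nS_n\psi(y)-\int\psi\,d\mu_j\big|<\gamma$, and with $\tfrac1n\sum_{i=0}^{n-1}\delta_{T^iy}$ in a prescribed weak* neighbourhood of $\mu_j$, for all $y\in Y_n^{(j)}$; an Egorov-type argument furnishes in addition that $p(y,n,\epsilon)/n<\gamma$ for all such $y$ once $n$ is large. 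Fix the alternating pattern $j(k)=1$ for $k$ odd and $j(k)=2$ for $k$ even, and choose scales $N_1\ll N_2\ll\cdots$ so fast that $N_k$ dominates $\sum_{l<k}N_l$ together with all interpolating time lags used through stage $k-1$. At stage $k$, for each tuple $(y_1,\dots,y_k)\in\prod_{l\le k}Y_{N_l}^{(j(l))}$ the non-uniform specification property yields a point $z$ that $\epsilon$-shadows $y_1,\dots,y_k$ consecutively with the interpolating gaps; the finitely many Bowen balls around these $z$'s at scale $n_k:=N_1+p_1+\cdots+N_k$ form the $k$-th generation, and $F$ is the nested intersection. The domination of the scales forces, for every $x\in F$, $\tfrac1{n_k}S_{n_k}\varphi(x)$ to lie within $O(\gamma)+o(1)$ of $\int\varphi\,d\mu_{j(k)}$; since these two limiting values are distinct and each occurs infinitely often, $\lim_n\tfrac1nS_n\varphi(x)$ does not exist, i.e.\ $F\subseteq\widehat X(\varphi,T)$.

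\emph{Pressure lower bound and the equilibrium case.} Equip $F$ with the Moran measure $\mu^{*}$ assigning equal mass $\big(\prod_{l\le k}\#Y_{N_l}^{(j(l))}\big)^{-1}$ to each $k$-th generation Bowen ball. Using the separation of the $Y_n^{(j)}$ and the fast growth of the $N_k$ one bounds, for every $x\in F$ and every large $m$, $\tfrac1m\log\mu^{*}(B_m(x,\epsilon))\le-(h_\mu-O(\gamma))+o(1)$, while the closeness of empirical measures to $\mu_j$ gives $\tfrac1m\sup_{y\in B_m(x,\epsilon)}S_m\psi(y)\le\int\psi\,d\mu+O(\gamma)+o(1)$. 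Feeding these into the mass-distribution principle for the Pesin pressure of Definition 1.2 yields $P(F,\psi,\epsilon)\ge h_\mu+\int\psi\,d\mu-O(\gamma)$ uniformly in small $\epsilon$; letting $\epsilon\to0$, then $\gamma\to0$, then taking the supremum over $\mu\in M(K,T)$ proves the displayed inequality. If some $\mu\in M(K,T)$ is an equilibrium state for $(X,T)$ with respect to $\psi$, then $h_\mu+\int\psi\,d\mu=P(X,\psi)$ by the variational principle, hence $P(\widehat X(\varphi,T),\psi)\ge P(X,\psi)$; the reverse inequality is property (1) following Definition 1.2 applied to $\widehat X(\varphi,T)\subseteq X$, so $P(\widehat X(\varphi,T),\psi)=P(X,\psi)$.

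\emph{Main obstacle.} The delicate step is the gluing: Varandas's time lag $p(x,n,\epsilon)$ depends on the base point and is controlled only through a $\limsup$ in $n$ and only $m$-almost everywhere, so one must keep the whole construction inside a single full-$m$-measure set where shadowing is valid, pass via Egorov to sub-collections of the separated sets on which the lag is uniformly sublinear without surrendering more than $\gamma$ of the entropy, and choose the $N_k$ inductively large enough to absorb the accumulated lags — all while retaining the quantitative control needed for the mass-distribution estimate. Treating non-ergodic $\mu_j$ in the generic-point lemma via the ergodic decomposition is a further, more routine, technicality.
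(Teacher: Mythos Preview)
Your outline follows the same architecture as the paper: build a Moran--Cantor set $F\subset\widehat X(\varphi,T)$ by alternately shadowing long generic blocks for two measures with distinct $\varphi$-averages, control the time lags $p(\cdot,n,\epsilon)$ via Egorov on $K$, and feed a reference measure on $F$ into the pressure distribution principle (the paper's Lemma~2.2). The equilibrium-state conclusion is also identical. Two harmless differences: the paper weights its reference measures by $\exp S_n\psi$ so that the estimate of Lemma~2.7 plugs directly into Lemma~2.2, while you use an equal-mass Moran measure and separately control $S_n\psi/n$ along $F$; and the paper concatenates $N_k$ copies of a fixed-length block at stage $k$ whereas you take a single block of length $N_k$. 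Both variants work, though note that in Lemma~2.2 one needs a \emph{lower} bound on $S_n\psi$ at the ball center, not the upper bound you wrote.

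The genuine gap is your handling of the non-ergodic $\mu_j=(1-\epsilon)\mu+\epsilon\nu_j$. You assert, ``by Katok's entropy formula together with the ergodic decomposition'', the existence of $(n,4\epsilon)$-separated sets $Y_n^{(j)}\subset K$ of cardinality $\ge\exp n(h_{\mu_j}-\gamma)$ whose points have empirical measures in a prescribed weak* neighbourhood of $\mu_j$. For a non-ergodic $\mu_j$ such points do not exist a priori: $\mu_j$-almost every point is generic for an ergodic component, not for $\mu_j$. They must be \emph{manufactured} by specification, and this is precisely the content of the paper's Section~3: one first reduces (by affineness of entropy) to ergodic $\mu_1\in E(K,T)$ near the supremum and ergodic $\nu\in E(K,T)$ with $\int\varphi\,d\nu\neq\int\varphi\,d\mu_1$, then for the convex combination $\mu_2=t_1\mu_1+t_2\nu$ one builds the separated set as a \emph{product} $\Theta_k=\Theta_k^1\times\Theta_k^2$, gluing a length-$[t_1\widehat n_k]$ piece generic for $\mu_1$ to a length-$[t_2\widehat n_k]$ piece generic for $\nu$ with an inner time lag. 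This inner gluing has to be absorbed into all the later estimates (separation survives, $M_k=M_k^1M_k^2$ has the right exponential size, the concatenated Birkhoff averages of $\varphi$ indeed approach $\int\varphi\,d\mu_2$, and everything stays in $K$). Calling this a ``more routine technicality'' understates it: it is the step that distinguishes Theorem~1.1 from the ergodic Theorem~1.2, and your sentence ``there is an $(n,4\epsilon)$-separated set $Y_n^{(j)}$ with empirical measures near $\mu_j$'' is unjustified as written. Once you make this two-level gluing explicit (and take $\mu,\nu_1,\nu_2$ ergodic from the outset), your argument coincides with the paper's.
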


\begin{thm}
Let us assume the hypotheses of theorem 1.1. Let
\begin{align*}
\mathbf{C}:=\sup\left\{h_\mu+\int\psi d\mu:\mu\in M(K,T)\right\}.
\end{align*}
We assume further that $P(X,\psi)$ is finite and for all $\gamma>0,$ there exist ergodic measures $\mu_1,\mu_2\in M(K,T)$ satisfying
\begin{enumerate}
  \item $h_{\mu_i}+\int\psi d\mu_i>\mathbf{C}-\gamma$ for $i=1,2$,
  \item $\int\varphi d\mu_1\neq \int\varphi d\mu_2$.
\end{enumerate}
Then $P(\widehat{X}(\varphi,T),\psi)\geq\mathbf{C}$. If there exists $\mu\in M(K,T)$ such that $\mu$ is a equilibrium state for $T$ with respect to potential $\psi$, then we have $P(\widehat{X}(\varphi,T),\psi)=P(X,\psi)$.
\end{thm}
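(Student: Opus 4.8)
The plan is to run a \emph{moving-target Moran construction} built on the two ergodic measures supplied by the hypothesis, and then to extract the pressure bound with a distribution (Frostman-type) principle for $P(\cdot,\psi,\epsilon)$. By the monotonicity in property~(1), it suffices to produce, for each $\gamma>0$, a set $F\subset\widehat{X}(\varphi,T)$ with $P(F,\psi)\ge\mathbf{C}-\gamma$; sending $\gamma\downarrow0$ then gives $P(\widehat{X}(\varphi,T),\psi)\ge\mathbf{C}$. So fix $\gamma>0$, take ergodic $\mu_1,\mu_2\in M(K,T)$ satisfying (1)--(2), write $a_i=\int\varphi\,d\mu_i$ (so $|a_1-a_2|>0$), and fix a small $\epsilon>0$ (to be sent to $0$ at the end) with $\epsilon$ and the modulus of continuity of $\varphi$ at scale $\epsilon$ both small compared with $|a_1-a_2|$; note $P(X,\psi)<\infty$ keeps $\mathbf{C}$ and the $h_{\mu_i}$ finite. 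For $i=1,2$, intersect the full-$\mu_i$-measure sets on which Birkhoff's theorem holds for $\varphi$ and $\psi$ with the full-measure subset of $K$ where the shadowing of Definition~1.1 applies, and invoke Egorov's theorem to obtain $A_i$ with $\mu_i(A_i)>1-\epsilon$ on which $\tfrac1n S_n\varphi\to a_i$, $\tfrac1n S_n\psi\to\int\psi\,d\mu_i$ and $p(\cdot,n,\epsilon)/n\to0$ uniformly; by Katok's entropy formula, for each large $n$ there is an $(n,\epsilon)$-separated $W_i(n)\subset A_i$ with $\#W_i(n)\ge e^{n(h_{\mu_i}-\epsilon)}$, every point of which has $|\tfrac1n S_n g-\int g\,d\mu_i|<\epsilon$ for $g\in\{\varphi,\psi\}$ and time lag $\le\epsilon n$.

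Next I build the Cantor set with a two-level scheme. Choose $n_k\to\infty$ (above the thresholds just used) and $q_k$ so that the $k$-th \emph{block} --- a concatenation of $q_k$ consecutive length-$n_k$ generic segments, with the gaps of Definition~1.1 (each $\le\epsilon n_k$) inserted between them and between blocks --- has length $b_k$ satisfying $b_k\gg b_1+\dots+b_{k-1}=:t_{k-1}$ and simultaneously $n_{k+1}\ll t_k$. The $q_k$ segment-targets of block $k$ are chosen freely from $W_{i(k)}(n_k)$, with $i(k)=1$ for odd $k$ and $i(k)=2$ for even $k$. Applying Definition~1.1 to finite truncations and passing to a compactness limit yields, for each admissible choice $\omega$ of all segment-targets, a point $z(\omega)$ that $\epsilon/4$-shadows all of them in order; let $F$ be the closure of $\{z(\omega)\}$, with its natural Moran structure. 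Since $b_k$ dominates $t_{k-1}$ and the gaps occupy a vanishing fraction, at the time $t_k$ ending block $k$ one has $\tfrac1{t_k}S_{t_k}\varphi(z)=a_{i(k)}+O(\epsilon)+o(1)$ for every $z\in F$; as $a_1\ne a_2$ and $\epsilon$ is small, the Birkhoff averages of $\varphi$ along $z$ do not converge, so $F\subset\widehat{X}(\varphi,T)$.

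For the pressure bound, put on $F$ the Moran measure $\nu$ under which the $q_k$ choices of block $k$ are independent and uniform on $W_{i(k)}(n_k)$. Fix $0<\epsilon'<\epsilon/4$. For $z\in F$ and $m$ large, any $z'\in B_m(z,\epsilon')\cap F$ must agree with $z$ on every generic segment completed before time $m$ (here the $(n_k,\epsilon)$-separation of $W_{i(k)}(n_k)$ is weighed against the $\epsilon/4$-shadowing), and by $n_k\ll t_{k-1}$ those segments cover at least $(1-O(\epsilon))m$ of $[0,m)$; hence
\[
\nu\big(B_m(z,\epsilon')\big)\ \le\ \exp\Big(-\sum n_j\big(h_{\mu(j)}-\epsilon\big)\Big),
\]
the sum over generic segments completed before time $m$, with $n_j$ their lengths and $\mu(j)\in\{\mu_1,\mu_2\}$ the measure used on segment $j$. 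Splitting $h_{\mu(j)}-\epsilon=(h_{\mu(j)}+\int\psi\,d\mu(j))-\int\psi\,d\mu(j)-\epsilon$, using hypothesis~(1) as $h_{\mu_i}+\int\psi\,d\mu_i>\mathbf{C}-\gamma$, together with the uniform control of $S\psi$ on the shadowed segments and the negligibility of the gaps, this is at most $\exp\big(S_m\psi(z)-(\mathbf{C}-\gamma)m+m\rho(\epsilon)\big)$ with $\rho(\epsilon)\to0$ as $\epsilon\to0$. The pressure distribution principle (a probability measure on $F$ obeying such a ball bound for all $z\in F$ and all large $m$ forces $P(F,\psi,\epsilon')\ge\mathbf{C}-\gamma-\rho(\epsilon)$, essentially by testing the bound against an arbitrary cover in $\Gamma_n(F,\epsilon')$ in Definition~1.2) then gives $P(F,\psi)\ge\mathbf{C}-\gamma-\rho(\epsilon)$; since $F\subset\widehat{X}(\varphi,T)$ and $\epsilon$ is arbitrary, $P(\widehat{X}(\varphi,T),\psi)\ge\mathbf{C}-\gamma$, and then $\gamma\downarrow0$ gives $P(\widehat{X}(\varphi,T),\psi)\ge\mathbf{C}$. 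Finally, if some $\mu\in M(K,T)$ is an equilibrium state for $\psi$, then $\mathbf{C}\ge h_\mu+\int\psi\,d\mu=P(X,\psi)$ while $\mathbf{C}\le P(X,\psi)$ by the variational principle, so $\mathbf{C}=P(X,\psi)$; combined with $P(\widehat{X}(\varphi,T),\psi)\le P(X,\psi)$ from property~(1) this yields $P(\widehat{X}(\varphi,T),\psi)=P(X,\psi)$.

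The main obstacle is the \emph{uniform} ball estimate for $\nu$. The naive one-segment-per-block construction fails at times $m$ lying deep inside a very long block, where the segments completed before $m$ carry only $o(m)$ of the length and the exponential bound collapses; the two-level device --- many short segments per block, with $n_k$ small relative to $t_{k-1}$ yet $b_k\gg t_{k-1}$ so that the $\varphi$-averages still oscillate --- is precisely what fixes this, and it must be calibrated hand-in-hand with the accounting of the time lags (showing the gaps up to time $m$ total $O(\epsilon m)$ and change $S_m\psi$ by $O(\epsilon m)$), which rests on $\lim_{\epsilon\to0}\limsup_n p(x,n,\epsilon)/n=0$ and Egorov's theorem. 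Once the estimate is in place, the passage to $P(F,\psi,\epsilon')\ge\mathbf{C}-\gamma-\rho(\epsilon)$ through Definition~1.2 is routine.
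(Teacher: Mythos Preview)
Your proposal is correct and follows essentially the same architecture as the paper: Egorov/Birkhoff to extract good sets inside $K$, large separated subsets of these, a two-level Moran--Cantor construction with blocks alternating between $\mu_1$- and $\mu_2$-generic segments (with the block lengths chosen so that $b_k\gg t_{k-1}$ while $n_{k+1}\ll t_k$, exactly the calibration (2.3)--(2.4) in the paper), verification that $F\subset\widehat{X}(\varphi,T)$, and a Frostman-type ball estimate fed into the pressure distribution principle.

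There are two genuine implementation differences worth noting. First, the paper works with \emph{$\psi$-weighted} separated sets via Lemma~2.1 (a Katok-type formula for the free energy $h_\mu+\int\psi\,d\mu$), obtaining directly $M_k=\sum_{x\in\Theta_k}e^{S_{n_k}\psi(x)}\ge e^{n_k(\mathbf{C}-4\gamma)}$, and correspondingly builds the atomic measures $\nu_k$ with weights $\exp S_{n_i}\psi$; you instead count cardinalities via Katok's entropy formula, put the \emph{uniform} Moran measure on $F$, and recover the $\psi$-term afterwards through Birkhoff control of $\tfrac1n S_n\psi$ on the Egorov sets $A_i$. Both routes are standard; the paper's buys slightly cleaner bookkeeping (no separate $\psi$-Birkhoff hypothesis on $A_i$ is needed), while yours is closer to the pure-entropy arguments and makes the role of hypothesis~(1) transparent at the very last step. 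Second, the paper defines a sequence $\mu_k$ of atomic measures on the finite approximants $L_k$ and invokes Lemma~2.2 for a weak-$*$ limit sitting on $F$, whereas you push a single product measure from the symbol space to $F$ and argue Frostman directly; the paper's version sidesteps any question of whether the coding $\omega\mapsto z(\omega)$ is injective or measurable, which you should justify (or simply replace by the sequence-of-measures device) if you write this up in full.
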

At first, we prove Theorem 1.2 and then explain how to remove additional hypothesis to obtain theorem 1.1.
The case that $P(X,\psi)$ is infinite can be included in our proof.

%%%%%%%%%%%%%%%%%%%%%%%%%%%%%%%%%%%%%%%%%%%%%%%%%%%%%%%%%%%%%%%%%%%%%%%%%
%%%%%%%%%%%%%%%%%%%%%%%%%%%%%%%%%%%%%%%%%%%%%%%%%%%%%%%%%%%%%%%%%%%%%%%%%
%%%%%%%%%%%%%%%%%%%%%%%%%%%%%%%%%%%%%%%%%%%%%%%%%%%%%%%%%%%%%%%%%%%%%%%%%
%%%%%%%%%%%%%%%%%%%%%%%%%%%%%%%%%%%%%%%%%%%%%%%%%%%%%%%%%%%%%%%%%%%%%%%%%
\section{Proof of Main Result}

Before showing the lower bound, we give some important lemmas as
follows.

\begin{lem}{\rm\cite{Tho1,Tho2}}
Let $(X,d)$ be a compact metric space, $T:X\to X$ be a continuous map and $\mu$
be an ergodic invariant measure. For $\epsilon>0$, $\gamma\in (0,1)$ and $\psi\in C(X)$, define
\begin{align*}
N^\mu(\psi,\gamma,\epsilon,n)=\inf\left\{\sum_{x\in S}\exp\left\{\sum_{i=0}^{n-1}\psi(T^ix)\right\}\right\},
\end{align*}
where the infimum is taken over all sets $S$ which $(n,\epsilon)$ span some set $Z$ with $\mu(Z)\geq1-\gamma$.
we have
\begin{align*}
h_\mu+\int\psi d\mu=\lim_{\epsilon\to0}\liminf_{n\to\infty}\frac{1}{n}\log N^\mu(\psi,\gamma,\epsilon,n).
\end{align*}
The formula remains true if we replace the $\liminf$ by $\limsup$.
\end{lem}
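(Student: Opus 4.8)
The plan is to prove this as the weighted (topological-pressure) version of Katok's entropy formula: one adapts Katok's covering argument for $h_\mu$ and simultaneously uses Birkhoff's ergodic theorem to trade $S_n\psi(x)$ for $n\int\psi\,d\mu$ on a set of generic points. Fix $\epsilon>0$ and $\delta\in(0,1-\gamma)$. First I would choose a finite measurable partition $\xi=\{P_1,\dots,P_r\}$ with $\mathrm{diam}(\xi)<\epsilon$, $\mu(\partial\xi)=0$, and $h_\mu(\xi)>h_\mu-\epsilon$ (refine a partition that almost realises $h_\mu$, then shrink its atoms and perturb their boundaries to be $\mu$-null). Writing $\xi_0^{n-1}=\bigvee_{i=0}^{n-1}T^{-i}\xi$, the Shannon--McMillan--Breiman theorem (applied to $h_\mu(\xi)$), Birkhoff's theorem (applied to $\psi$ and to the indicator of the $\epsilon$-neighbourhood of $\partial\xi$, whose measure is small since $\mu(\partial\xi)=0$), and Egorov's theorem together furnish an integer $N_0$ and, for each $n\ge N_0$, a set $G_n$ with $\mu(G_n)>1-\delta$ on which simultaneously $e^{-n(h_\mu(\xi)+\delta)}\le\mu(\xi_0^{n-1}(x))\le e^{-n(h_\mu(\xi)-\delta)}$, $|S_n\psi(x)-n\int\psi\,d\mu|<n\delta$, and $T^i x$ is at distance $>\epsilon$ from $\partial\xi$ for at least $(1-\delta)n$ of the indices $0\le i<n$.

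For the lower bound, take any $Z$ with $\mu(Z)\ge1-\gamma$ and any $(n,\epsilon)$-spanning set $S$ of $Z$; the balls $B_n(x,\epsilon)$, $x\in S$, cover $Z\cap G_n$, a set of measure $\ge1-\gamma-\delta$. The crucial estimate is that, for $n\ge N_0$, any ball $B_n(x,\epsilon)$ meeting $G_n$ has $\mu(B_n(x,\epsilon))\le r^{\delta n}e^{-n(h_\mu(\xi)-\delta)}$: at every coordinate $i$ where $T^i x$ is $\epsilon$-far from $\partial\xi$ the ball sits inside the $\xi$-atom of $T^i x$, so $B_n(x,\epsilon)$ lies in the union of the $\xi_0^{n-1}$-atoms obtained from $\xi_0^{n-1}(x)$ by changing only the $\le\delta n$ remaining coordinates, and there are at most $r^{\delta n}$ of these. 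Moreover, picking $y\in B_n(x,\epsilon)\cap G_n$ gives $S_n\psi(x)\ge S_n\psi(y)-n\,\mathrm{osc}(\psi,\epsilon)\ge n\int\psi\,d\mu-n\delta-n\,\mathrm{osc}(\psi,\epsilon)$, where $\mathrm{osc}(\psi,\epsilon)$ denotes the modulus of continuity of $\psi$ at scale $\epsilon$. Combining, $\sum_{x\in S}e^{S_n\psi(x)}\ge e^{n(\int\psi\,d\mu-\delta-\mathrm{osc}(\psi,\epsilon))}\cdot\#\{x\in S:B_n(x,\epsilon)\cap G_n\ne\emptyset\}\ge e^{n(\int\psi\,d\mu-\delta-\mathrm{osc}(\psi,\epsilon))}\,e^{n(h_\mu(\xi)-\delta)}r^{-\delta n}(1-\gamma-\delta)$, hence $\frac1n\log N^\mu(\psi,\gamma,\epsilon,n)\ge h_\mu(\xi)+\int\psi\,d\mu-(2+\log r)\delta-\mathrm{osc}(\psi,\epsilon)$ for all $n\ge N_0$.

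For the upper bound I would cover $G_n$ by $\xi_0^{n-1}$-atoms: each atom meeting $G_n$ has $\mu$-measure $\ge e^{-n(h_\mu(\xi)+\delta)}$, so there are at most $e^{n(h_\mu(\xi)+\delta)}$ of them; since each such atom has $d_n$-diameter $\le\mathrm{diam}(\xi)<\epsilon$, choosing one point of $G_n$ in each gives a set $S$ that $(n,\epsilon)$-spans the measure-$\ge1-\delta$ set $G_n$, with $\sum_{x\in S}e^{S_n\psi(x)}\le e^{n(h_\mu(\xi)+\delta)}e^{n(\int\psi\,d\mu+\delta)}$ (assuming $\delta\le\gamma$; otherwise repeat with a smaller $\delta$). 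Thus $\frac1n\log N^\mu(\psi,\gamma,\epsilon,n)\le h_\mu(\xi)+\int\psi\,d\mu+2\delta$ for $n\ge N_0$. Since both inequalities hold for every $n\ge N_0$, the $\liminf$ and the $\limsup$ over $n$ of $\frac1n\log N^\mu(\psi,\gamma,\epsilon,n)$ both lie in an interval of length $(4+\log r)\delta+\mathrm{osc}(\psi,\epsilon)$ around $h_\mu(\xi)+\int\psi\,d\mu$; letting $\delta\to0$ (with $N_0$ allowed to grow) confines them to $[\,h_\mu(\xi)+\int\psi\,d\mu-\mathrm{osc}(\psi,\epsilon),\ h_\mu(\xi)+\int\psi\,d\mu\,]$, and then letting $\epsilon\to0$ — so $\mathrm{osc}(\psi,\epsilon)\to0$ by uniform continuity of $\psi$ and $h_\mu(\xi)\to h_\mu$ by our choice of $\xi$ — yields the asserted identity for both the $\liminf$ and the $\limsup$ version (and also covers the case $h_\mu=+\infty$ by squeezing).

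The step I expect to be the main obstacle is the ball--atom comparison $\mu(B_n(x,\epsilon))\le r^{\delta n}e^{-n(h_\mu(\xi)-\delta)}$, which is exactly what forces the order of the limits: one needs $\epsilon$ small enough (relative to $\xi$) that the $\epsilon$-neighbourhood of $\partial\xi$ has $\mu$-measure $\ll\delta$, so that a $\mu$-typical orbit is $\epsilon$-far from $\partial\xi$ at all but a $\delta$-fraction of times, and only then is an $(n,\epsilon)$-ball essentially a single dynamical atom and the entropy of $\xi$ visible; consequently $\xi$ cannot be fixed before $\epsilon$. The remaining ingredients — the Egorov uniformisation of SMB and Birkhoff, the oscillation bound on $\psi$, and turning a family of atoms into an $(n,\epsilon)$-spanning set of a set of measure close to one — are routine.
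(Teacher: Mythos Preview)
The paper does not prove this lemma at all; it is quoted verbatim from Thompson \cite{Tho1,Tho2} and used as a black box. So there is no ``paper's proof'' to compare with --- your outline is being measured against the standard argument. The ingredients you assemble (SMB, Birkhoff for $\psi$ and for the indicator of the $\epsilon$-collar of $\partial\xi$, Egorov, the oscillation bound) are exactly the right ones, and the overall architecture is correct.

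There is, however, a genuine circularity in your choice of $\xi$ versus $\epsilon$, and it is not merely the ``main obstacle'' you flag --- as written it makes the lower bound vacuous. You open by fixing $\epsilon$ and then choosing $\xi$ with $\mathrm{diam}(\xi)<\epsilon$; this is precisely what the \emph{upper} bound needs, since atoms of $\xi_0^{n-1}$ then have $d_n$-diameter $<\epsilon$ and one representative per atom gives an $(n,\epsilon)$-spanning set. But your \emph{lower} bound rests on the claim that for most $i$ one has $B(T^ix,\epsilon)\subset\xi(T^ix)$, i.e.\ $d(T^ix,\partial\xi)>\epsilon$. If $\mathrm{diam}(\xi)<\epsilon$ and the atoms are not clopen, then \emph{every} point of $X$ lies within $\epsilon$ of $\partial\xi$, so the ``good'' frequency is identically zero and the ball--atom comparison $\mu(B_n(x,\epsilon))\le r^{\delta n}e^{-n(h_\mu(\xi)-\delta)}$ never fires. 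Your last paragraph senses the tension but states it inconsistently: ``$\epsilon$ small enough relative to $\xi$'' and ``$\xi$ cannot be fixed before $\epsilon$'' are opposite orders of quantifiers.

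The repair is to decouple the two bounds and use two different partitions. For the upper bound keep your argument: given $\epsilon>0$, take $\xi=\xi(\epsilon)$ with $\mathrm{diam}(\xi)<\epsilon$; your estimate gives $\limsup_n\frac1n\log N^\mu(\psi,\gamma,\epsilon,n)\le h_\mu(\xi)+\int\psi\,d\mu+2\delta\le h_\mu+\int\psi\,d\mu+2\delta$, uniformly in $\epsilon$. For the lower bound, given $\eta>0$ first choose a partition $\zeta$ with $\mu(\partial\zeta)=0$ and $h_\mu(\zeta)>h_\mu-\eta$ (or $h_\mu(\zeta)>1/\eta$ in the infinite-entropy case), and only \emph{then} pick $\epsilon_0>0$ so small that $\mu\{y:d(y,\partial\zeta)\le\epsilon_0\}<\eta$; for all $\epsilon<\epsilon_0$ your ball--atom comparison now goes through with $\zeta$ in place of $\xi$ (and $r=\#\zeta$), yielding $\liminf_n\frac1n\log N^\mu(\psi,\gamma,\epsilon,n)\ge h_\mu(\zeta)+\int\psi\,d\mu-C\eta-\mathrm{Var}(\psi,\epsilon)$. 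Sending $\epsilon\to0$ and then $\eta\to0$ matches the upper bound, and since each inequality holds for all sufficiently large $n$ the $\liminf$ and $\limsup$ statements coincide.
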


\begin{lem}{\rm\cite{Tho1,Tho2}}
(Generalised Pressure Distribution Principle) Let $(X,d,T)$ be a
topological dynamical system. Let $Z\subset X$ be an arbitrary Borel
set. Suppose there exist $\epsilon>0$ and $s\in\mathbb{R}$ such that one can
find a sequence of Borel probability measures $\mu_k,$ a constant
$K>0$ and an integer $N$ satisfying
\begin{align*}
\limsup\limits_{k\to\infty}\mu_k(B_n(x,\epsilon))\leq
K\exp(-ns+\sum\limits_{i=0}^{n-1}\psi(T^ix))
\end{align*}
for every ball $B_n(x,\epsilon)$ such that $B_n(x,\epsilon)\cap
Z\neq\emptyset$ and $n\geq N.$ Furthermore, assume that at least one
limit measure $\nu$ of the sequence $\mu_k$ satisfies $\nu(Z)>0.$
Then $P(Z, \psi,\epsilon)\geq s.$
\end{lem}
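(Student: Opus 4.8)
The plan is to unfold Definition 1.2 and prove directly that $M(Z,s,\psi,\epsilon)>0$. For each fixed $n$ the summands $\exp(-tm+\sup_{y}S_m\psi(y))$ are decreasing in $t$ (recall $m\ge n\ge N\ge 1$), so $t\mapsto M(Z,t,\psi,n,\epsilon)$ is non-increasing, and passing to the limit in $n$ the same holds for $t\mapsto M(Z,t,\psi,\epsilon)$. Hence a strictly positive value at $t=s$ forces $M(Z,t,\psi,\epsilon)>0$ for every $t\le s$, so no such $t$ lies in $\{t':M(Z,t',\psi,\epsilon)=0\}$, and therefore $P(Z,\psi,\epsilon)=\inf\{t':M(Z,t',\psi,\epsilon)=0\}\ge s$, which is the claim. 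So everything reduces to a positive lower bound for $M(Z,s,\psi,\epsilon)$.

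Fix $n\ge N$ and an arbitrary cover $\mathcal C=\{B_{m_i}(x_i,\epsilon)\}_i\in\Gamma_n(Z,\epsilon)$. Discarding those balls that do not meet $Z$ leaves a cover of $Z$ (any point of $Z$ lies in a ball that then meets $Z$) and only decreases $\sum_{B_{m_i}(x_i,\epsilon)\in\mathcal C}\exp(-sm_i+\sup_{y\in B_{m_i}(x_i,\epsilon)}S_{m_i}\psi(y))$, so I may assume every ball of $\mathcal C$ meets $Z$; together with $m_i\ge n\ge N$ this puts each ball in the situation covered by the hypothesis. Using monotonicity of the exponential and $\sup_{y\in B_{m_i}(x_i,\epsilon)}S_{m_i}\psi(y)\ge S_{m_i}\psi(x_i)$ together with the standing inequality of the lemma,
\begin{align*}
\exp\Big(-sm_i+\sup_{y\in B_{m_i}(x_i,\epsilon)}S_{m_i}\psi(y)\Big)\ \ge\ \exp\big(-sm_i+S_{m_i}\psi(x_i)\big)\ \ge\ \frac{1}{K}\,\limsup_{k\to\infty}\mu_k\big(B_{m_i}(x_i,\epsilon)\big).
\end{align*}

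Next I would replace the full-sequence $\limsup$ by the mass of the limit measure $\nu$. Each Bowen ball $B_m(x,\epsilon)=\bigcap_{j=0}^{m-1}T^{-j}B(T^jx,\epsilon)$ is a finite intersection of open sets, hence open, so if $\mu_{k_l}\to\nu$ weak$^*$ along the subsequence realising $\nu$ then the portmanteau theorem gives $\liminf_l\mu_{k_l}(B_{m_i}(x_i,\epsilon))\ge\nu(B_{m_i}(x_i,\epsilon))$, whence $\limsup_k\mu_k(B_{m_i}(x_i,\epsilon))\ge\nu(B_{m_i}(x_i,\epsilon))$. Summing over $\mathcal C$ and using countable subadditivity of $\nu$ with $Z\subseteq\bigcup_iB_{m_i}(x_i,\epsilon)$,
\begin{align*}
\sum_{B_{m_i}(x_i,\epsilon)\in\mathcal C}\exp\Big(-sm_i+\sup_{y\in B_{m_i}(x_i,\epsilon)}S_{m_i}\psi(y)\Big)\ \ge\ \frac{1}{K}\sum_i\nu\big(B_{m_i}(x_i,\epsilon)\big)\ \ge\ \frac{\nu(Z)}{K}\ >\ 0.
\end{align*}
Since $\mathcal C$ was arbitrary in $\Gamma_n(Z,\epsilon)$, this gives $M(Z,s,\psi,n,\epsilon)\ge\nu(Z)/K$ for all $n\ge N$, hence $M(Z,s,\psi,\epsilon)\ge\nu(Z)/K>0$, and so $P(Z,\psi,\epsilon)\ge s$ by the first paragraph.

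This is the topological-pressure analogue of the classical mass (Frostman) distribution principle, so no single step is genuinely hard; the point that needs care is the mismatch between the two limiting procedures, namely that the hypothesis controls $\limsup_k\mu_k(B_n(x,\epsilon))$ along the whole sequence $(\mu_k)$ while positivity of mass is only available for a weak$^*$ subsequential limit $\nu$. This is resolved precisely because Bowen balls are open, so the lower-semicontinuity half of the portmanteau theorem yields $\limsup_k\mu_k(B)\ge\liminf_l\mu_{k_l}(B)\ge\nu(B)$; the only other bookkeeping is the harmless removal of cover elements disjoint from $Z$ before invoking the hypothesis.
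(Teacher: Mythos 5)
Your proof is correct: the reduction to showing $M(Z,s,\psi,\epsilon)>0$, the lower bound $\exp(-sm_i+\sup_{y}S_{m_i}\psi(y))\geq K^{-1}\limsup_k\mu_k(B_{m_i}(x_i,\epsilon))\geq K^{-1}\nu(B_{m_i}(x_i,\epsilon))$ via the open-set half of the portmanteau theorem, and the final appeal to subadditivity of $\nu$ over the cover all go through. The paper itself gives no proof, only the citation to Thompson, but the argument there is precisely this Frostman-type mass distribution argument, so you have reproduced the intended proof.
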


%%%%%%%%%%%%%%%%%%%%%%%%%%%%%%%%%%%%%%%%%%%%%%%%%%%%%%%%%%%%%%%%%%%%%%%%%
%%%%%%%%%%%%%%%%%%%%%%%%%%%%%%%%%%%%%%%%%%%%%%%%%%%%%%%%%%%%%%%%%%%%%%%%%
%%%%%%%%%%%%%%%%%%%%%%%%%%%%%%%%%%%%%%%%%%%%%%%%%%%%%%%%%%%%%%%%%%%%%%%%%
%%%%%%%%%%%%%%%%%%%%%%%%%%%%%%%%%%%%%%%%%%%%%%%%%%%%%%%%%%%%%%%%%%%%%%%%%
Fix a small $\gamma>0,$ and take the measures $\mu_1$ and $\mu_2$ provided by hypothesis. Choose $\delta>0$ so small that
\begin{align*}
\left|\int\varphi d\mu_1-\int\varphi d\mu_2\right|>4\delta.
\end{align*}
Let $\rho:\mathbb N \to\{1,2\}$ be given by $\rho(k)=(k+1)(\text{mod }2)+1.$
For $\epsilon>0$, by Egorov's theorem and Birkhoff's ergodic theorem, we can choose a strictly decreasing sequence
$\delta_k\to0$ with $\delta_1<\delta$ and a strictly increasing sequence $l_k\to\infty$ so the sets
\begin{align}
X_k:=\left\{x\in K:\frac{p(x,n,\epsilon/4)}{n}<\frac{1}{2^k}\text{ for all } n\geq l_k\right\}
\end{align}
and
\begin{align}
Y_k:=\left\{x\in K:\left|\frac{1}{n}S_n\varphi(x)-\int\varphi d\mu_{\rho(k)}\right|<\delta_k\text{ for all } n\geq l_k\right\}
\end{align}
satisfy $\mu_{\rho(k)}(X_k)>1-\frac{\gamma}{2}$ and $\mu_{\rho(k)}(Y_k)>1-\frac{\gamma}{2}$ for every $k$.
Then for any $k\in \mathbb{N}$, $\mu_{\rho(k)}(X_k\cap Y_k)>1-\gamma$.

\begin{lem}
For any sufficiently small $\epsilon>0,$ there exists a sequence $n_k\to\infty$ and a countable collection of finite sets $\Theta_k$ such that each $\Theta_k$ is an $(n_k,4\epsilon)$ separated set for $X_k\cap Y_k$ and $M_k:=\sum\limits_{x\in\Theta_k}\exp\left\{\sum\limits_{i=0}^{n_k-1}\psi(T^ix)\right\}$
satisfying
\begin{align*}
M_k\geq\exp(n_k(\mathbf{C}-4\gamma)).
\end{align*}
Furthermore, the sequence $n_k$ can be chosen so that $n_{k}\geq l_{k}$ and for any $x\in\Theta_{k}$, $\frac{p(x,n_{k},\epsilon/4)}{n_{k}}<\frac{1}{2^{k}}$.
\end{lem}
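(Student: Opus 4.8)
The plan is to derive Lemma 2.3 directly from the Katok-type formula in Lemma 2.1, applied to the two ergodic measures $\mu_1$ and $\mu_2$ furnished by the hypotheses, together with the elementary fact that a maximal $(n,\delta)$-separated subset of a set is $(n,\delta)$-spanning for it.

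First I would pin down the admissible range of $\epsilon$. By Lemma 2.1 applied with the fixed constant $\gamma$, for each $i\in\{1,2\}$ one has
\begin{align*}
h_{\mu_i}+\int\psi\,d\mu_i=\lim_{\eta\to0}\limsup_{n\to\infty}\frac1n\log N^{\mu_i}(\psi,\gamma,\eta,n).
\end{align*}
Since the left-hand side exceeds $\mathbf C-\gamma$ by hypothesis, there is $\eta_i>0$ so that $\limsup_{n\to\infty}\frac1n\log N^{\mu_i}(\psi,\gamma,\eta,n)>\mathbf C-2\gamma$ for all $0<\eta\le\eta_i$; ``sufficiently small $\epsilon$'' will mean $4\epsilon\le\min\{\eta_1,\eta_2\}$. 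As $\rho(k)\in\{1,2\}$, this gives, for every $k$,
\begin{align*}
\limsup_{n\to\infty}\frac1n\log N^{\mu_{\rho(k)}}(\psi,\gamma,4\epsilon,n)>\mathbf C-2\gamma .
\end{align*}

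Next, for each $k$ I would extract $n_k$ and build $\Theta_k$. From the last display the integers $n$ with $N^{\mu_{\rho(k)}}(\psi,\gamma,4\epsilon,n)>\exp(n(\mathbf C-2\gamma))$ form an infinite set, so one can choose $n_k$ in it with $n_k>\max\{l_k,\,n_{k-1}\}$; then $n_k\to\infty$ and $n_k\ge l_k$. Take $\Theta_k$ to be a maximal $(n_k,4\epsilon)$-separated subset of $X_k\cap Y_k$ (finite, by compactness of $X$). By maximality $\Theta_k$ is $(n_k,4\epsilon)$-spanning for $X_k\cap Y_k$, and since $\mu_{\rho(k)}(X_k\cap Y_k)>1-\gamma$, the set $\Theta_k$ competes in the infimum defining $N^{\mu_{\rho(k)}}(\psi,\gamma,4\epsilon,n_k)$. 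Hence
\begin{align*}
M_k=\sum_{x\in\Theta_k}\exp\Big(\sum_{i=0}^{n_k-1}\psi(T^ix)\Big)\ \ge\ N^{\mu_{\rho(k)}}(\psi,\gamma,4\epsilon,n_k)\ >\ \exp\big(n_k(\mathbf C-2\gamma)\big)\ \ge\ \exp\big(n_k(\mathbf C-4\gamma)\big).
\end{align*}
Finally, the time-lag claim is automatic: $\Theta_k\subset X_k\cap Y_k\subset X_k$ and $n_k\ge l_k$, so the definition $(2.1)$ of $X_k$ yields $p(x,n_k,\epsilon/4)/n_k<1/2^k$ for every $x\in\Theta_k$.

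I do not expect a genuine obstacle here; the only point needing care is the passage from the variational formula of Lemma 2.1, which is a statement about the limit $\eta\to0$, to a usable lower bound at the \emph{fixed} scale $4\epsilon$. This is exactly what forces $\epsilon$ to be chosen small in a manner depending on $\gamma$ (and on $\mu_1,\mu_2$), precisely as the statement permits; once the scale is fixed, the rest is the standard maximal-separated-is-spanning comparison together with bookkeeping of the $\gamma$-losses.
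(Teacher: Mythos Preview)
Your proof is correct and follows essentially the same route as the paper: invoke Lemma~2.1 to get a lower bound on $N^{\mu_{\rho(k)}}(\psi,\gamma,4\epsilon,n)$ at a fixed small scale, use that $\mu_{\rho(k)}(X_k\cap Y_k)>1-\gamma$, and compare with an $(n_k,4\epsilon)$-separated set via the maximal-separated-is-spanning trick. The only cosmetic differences are that the paper uses the $\liminf$ version of Lemma~2.1 (so every large $n$ works, not just infinitely many) and passes through the auxiliary quantities $P_n\ge Q_n\ge N^{\mu_i}$ before extracting a near-optimal separated set, incurring an extra $\gamma$ loss; your direct choice of a maximal separated set is slightly cleaner and yields the sharper bound $M_k\ge\exp(n_k(\mathbf C-2\gamma))$.
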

\begin{proof}
By lemma 2.1, we can choose $\epsilon$ sufficiently small so
\begin{align*}
\liminf_{n\to\infty}\frac{1}{n}\log N^{\mu_i}(\psi,\gamma,4\epsilon,n)\geq h_{\mu_i}+\int\psi d\mu_i\geq\mathbf{C}-2\gamma \text{ for } i=1,2.
\end{align*}
For $k\in\mathbb{N}$, let
\begin{align*}
Q_n(X_k\cap Y_k,\psi,4\epsilon)=\inf\left\{\sum_{x\in S}\exp\left\{\sum_{k=0}^{n-1}\psi(T^ix)\right\}:S \text{ is } (n,4\epsilon) \text{ spanning set for } X_k\cap Y_k\right\},\\
P_n(X_k\cap Y_k,\psi,4\epsilon)=\sup\left\{\sum_{x\in S}\exp\left\{\sum_{k=0}^{n-1}\psi(T^ix)\right\}:S \text{ is } (n,4\epsilon) \text{ separated set for } X_k\cap Y_k\right\}.
\end{align*}
Since $\mu_{\rho(k)}(X_k\cap Y_k)>1-\gamma$ for every $k$, we have
\begin{align*}
P_n(X_k\cap Y_k,\psi,4\epsilon)\geq Q_n(X_k\cap Y_k,\psi,4\epsilon)\geq N^{\mu_i}(\psi,\gamma,4\epsilon,n).
\end{align*}
Let $M(k,n)=P_n(X_k\cap Y_k,\psi,4\epsilon)$. For each $k$, we obtain
\begin{align*}
\liminf_{n\to\infty}\frac{1}{n}\log M(k,n)\geq \liminf_{n\to\infty}\frac{1}{n}\log N^{\mu_{\rho(k)}}(\psi,\gamma,4\epsilon,n)\geq\mathbf{C}-2\gamma.
\end{align*}
We choose a sequence $n_k\to\infty$ such that $n_k\geq l_k$ and
\begin{align*}
\frac{1}{n_k}\log M(k,n_k)\geq\mathbf{C}-3\gamma.
\end{align*}
For each $k$, let $\Theta_k$ be a $(n_k,4\epsilon)$ separated set for $X_k\cap Y_k$ which satisfies
\begin{align*}
\frac{1}{n_k}\log\left\{\sum_{x\in \Theta_k}\exp\left\{\sum_{i=0}^{n_k-1}\psi(T^ix)\right\}\right\}\geq\frac{1}{n_k}\log M(k,n_k)-\gamma.
\end{align*}
Let $M_k:=\sum_{x\in \Theta_k}\exp\left\{\sum_{i=0}^{n_k-1}\psi(T^ix)\right\}$, then
\begin{align*}
\frac{1}{n_k}\log M_k\geq\frac{1}{n_k}\log M(k,n_k)-\gamma\geq\mathbf{C}-4\gamma,
\end{align*}
the desired results follows.
\end{proof}

We choose $\epsilon$ so small that
$\text{Var}(\psi,\epsilon):=\sup\{|\psi(x)-\psi(y)|:d(x,y)<\epsilon\}<\gamma$, and
$\text{Var}(\varphi,\epsilon):=\sup\{|\varphi(x)-\varphi(y)|:d(x,y)<\epsilon\}<\frac{\delta}{4}$.
We fix all the ingredients as before.
\subsubsection{Construction of the fractal F}

Let $n_0=0$. Let us choose a sequence with $N_{0}=0$ and $N_{k}$ increasing to $\infty$ sufficiently quickly so that
\begin{align}
\lim_{k\rightarrow\infty}\frac{n_{k+1}+\max\limits_{x\in\Theta_{k+1}}p(x,n_{k+1},\frac{\epsilon}{4})}{N_{k}}=0,
\end{align}
\begin{align}
\lim_{k\rightarrow\infty}\frac{N_{1}(n_{1}+\max\limits_{x\in\Theta_{1}}p(x,n_{1},\frac{\epsilon}{4}))
+\cdots+N_{k}(n_{k}+\max\limits_{x\in\Theta_{k}}p(x,n_{k},\frac{\epsilon}{4}))}{N_{k+1}}=0.
\end{align}
We enumerate the points in the set $\Theta_{i}$ and consider the set $\Theta_i^{N_{i}}$. Let $\underline{x}_{i}=(x_{1}^{i},\cdots,x_{N_{i}}^{i})\in \Theta_{i}^{N_{i}}$. For any $(\underline{x}_{1},\cdots,\underline{x}_{k})\in \Theta_{1}^{N_{1}}\times\cdots\times \Theta_{k}^{N_{k}}$, by the non-uniform specification property, we have
\begin{align*}
B(\underline{x}_{1},\cdots,\underline{x}_{k})=&
\bigcap_{i=1}^{k}\bigcap_{j=1}^{N_{i}}
T^{-\sum_{l=0}^{i-1}N_{l}(n_{l}+\max\limits_{x\in\Theta_{l}}p(x,n_{l},\frac{\epsilon}{4}))-
(j-1)(n_{i}+\max\limits_{x\in\Theta_{i}}p(x,n_{i},\frac{\epsilon}{4}))}B_{n_{i}}(x_{j}^{i},\frac{\epsilon}{4})\\
\neq&\emptyset.
\end{align*}
We define $F_{k}$ by
\begin{align*}
F_{k}=\bigcup\{\overline{B(\underline{x}_{1},\cdots,\underline{x}_{k})}:(\underline{x}_{1},\cdots,\underline{x}_{k})\in \Theta_{1}^{N_{1}}\times\cdots\times \Theta_{k}^{N_{k}}\}.
\end{align*}
Obviously, $F_{k}$ is compact and $F_{k+1}\subset F_{k}$. Define $F=\cap_{k=1}^{\infty}F_{k}$.

\begin{lem}
For any $p\in F$, $\lim\limits_{k\to\infty}\frac{1}{t_k}\sum\limits_{i=0}^{t_k-1}\varphi(T^i p)$ does not exist, where $t_{k}=\sum_{i=0}^{k}N_{i}(n_{i}+\max\limits_{x\in\Theta_{i}}p(x,n_{i},\frac{\epsilon}{4}))$.
\end{lem}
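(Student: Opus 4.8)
The plan is to exploit the alternating structure built into the construction: at "level $i$" the point $p\in F$ spends $N_i$ blocks of length roughly $n_i$ shadowing orbits of points in $\Theta_i\subset X_i\cap Y_i$, and each such point has Birkhoff average (up to time $n_i$) within $\delta_i$ of $\int\varphi\,d\mu_{\rho(i)}$. Since $\rho$ alternates between $1$ and $2$, and $\bigl|\int\varphi\,d\mu_1-\int\varphi\,d\mu_2\bigr|>4\delta$, the partial averages $\frac{1}{t_k}S_{t_k}\varphi(p)$ should be pulled close to $\int\varphi\,d\mu_1$ for even $k$ and close to $\int\varphi\,d\mu_2$ for odd $k$ (or vice versa), hence cannot converge. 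The key quantitative point is that the growth condition (2.4) on $N_k$ forces the contribution of all levels $1,\dots,k-1$ to $S_{t_k}\varphi(p)$ to be negligible compared to $t_k$; so $\frac{1}{t_k}S_{t_k}\varphi(p)$ is governed almost entirely by the $k$-th level of the construction.

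First I would fix $p\in F$ and, for each $k$, let $\underline{x}_1,\dots,\underline{x}_k$ be the tuples (compatible across levels, since $F_{k+1}\subset F_k$) with $p\in\overline{B(\underline{x}_1,\dots,\underline{x}_k)}$. Write $S_{t_k}\varphi(p)=\sum_{i=1}^{k}\sum_{j=1}^{N_i}\bigl(\text{Birkhoff sum over the $j$-th block at level $i$}\bigr)+(\text{time-lag contributions})$. For the $j$-th block at level $i$, $T^{(\cdots)}p$ lies in $B_{n_i}(x_j^i,\epsilon/4)$, so by the choice $\mathrm{Var}(\varphi,\epsilon)<\delta/4$ the block sum is within $n_i\delta/4$ of $S_{n_i}\varphi(x_j^i)$, which in turn is within $n_i\delta_i<n_i\delta$ of $n_i\int\varphi\,d\mu_{\rho(i)}$ because $x_j^i\in Y_i$ and $n_i\geq l_i$. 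The time-lag gaps and the leftover at the end of level $k$ contribute at most $\|\varphi\|_\infty$ times $\sum_{i\le k}N_i\max_{x\in\Theta_i}p(x,n_i,\epsilon/4)$, which by (2.3)--(2.4) is $o(t_k)$; similarly the full contribution of levels $1,\dots,k-1$ is bounded by $\|\varphi\|_\infty\sum_{i<k}N_i(n_i+\max_x p(x,n_i,\epsilon/4))=o(t_k)$ by (2.4). Dividing by $t_k$ and noting that the level-$k$ blocks account for a $1-o(1)$ fraction of $t_k$, I conclude $\bigl|\frac{1}{t_k}S_{t_k}\varphi(p)-\int\varphi\,d\mu_{\rho(k)}\bigr|\to 0$ along the appropriate subsequences.

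Finally, since $\rho(k)$ takes the value $1$ along one infinite arithmetic subsequence of $k$'s and $2$ along the complementary one, the sequence $\frac{1}{t_k}S_{t_k}\varphi(p)$ has two subsequential limits, $\int\varphi\,d\mu_1$ and $\int\varphi\,d\mu_2$, which differ by more than $4\delta>0$; hence $\lim_{k\to\infty}\frac{1}{t_k}S_{t_k}\varphi(p)$ does not exist. (This gives non-convergence along the subsequence $t_k$, which of course suffices to show the full limit $\lim_{n\to\infty}\frac1n S_n\varphi(p)$ does not exist, so $p\in\widehat X(\varphi,T)$.) The main obstacle is purely bookkeeping: carefully accounting for the many error terms — the shadowing error $\mathrm{Var}(\varphi,\epsilon)$ per block, the per-block Birkhoff error $\delta_i$, the cumulative time-lag gaps, and the contribution of lower levels — and checking that each is $o(t_k)$ using exactly the two limits (2.3) and (2.4) that were engineered into the choice of $N_k$. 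No single step is deep, but the estimate must be organized so that all these terms are visibly absorbed.
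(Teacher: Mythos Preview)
Your proposal is correct and follows essentially the same argument as the paper: both isolate the level-$k$ contribution, use (2.4) to show $t_{k-1}/t_k\to 0$ so that this contribution dominates, and combine the shadowing error $\mathrm{Var}(\varphi,\epsilon)<\delta/4$ with $x_j^k\in Y_k$ (hence $|S_{n_k}\varphi(x_j^k)-n_k\int\varphi\,d\mu_{\rho(k)}|<n_k\delta_k$) to pin $\frac{1}{t_k}S_{t_k}\varphi(p)$ near $\int\varphi\,d\mu_{\rho(k)}$. One small imprecision: since $\mathrm{Var}(\varphi,\epsilon)$ is a fixed positive constant, your estimate actually yields $\bigl|\frac{1}{t_k}S_{t_k}\varphi(p)-\int\varphi\,d\mu_{\rho(k)}\bigr|\le\delta$ for large $k$ rather than convergence to $0$, but this is exactly what the paper obtains and it already forces non-convergence because $\bigl|\int\varphi\,d\mu_1-\int\varphi\,d\mu_2\bigr|>4\delta$.
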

\begin{proof}
Choose $p\in F$, then for any $k\in \mathbb{N}$, $p\in F_{k}$. Let $p_{k}=T^{t_{k-1}}(p)$.
Then there exists $(x_{1}^{k},\cdots\,x_{N_{k}}^{k})\in \Theta_{k}^{N_{k}}$ such that
\begin{align*}
p_{k}\in\bigcap_{j=1}^{N_{k}}T^{-(j-1)(n_{k}+\max\limits_{x\in\Theta_{k}}p(x,n_{k},\frac{\epsilon}{4}))}\overline{B_{n_{k}}}(x_{j}^{k},\frac{\epsilon}{4}).
\end{align*}
Let $a_{j}=(j-1)(n_{k}+\max\limits_{x\in\Theta_{k}}p(x,n_{k},\frac{\epsilon}{4}))$. We have
\begin{align*}
&\left|S_{N_{k}(n_{k}+\max\limits_{x\in\Theta_{k}}p(x,n_{k},\frac{\epsilon}{4}))}\varphi(p_{k})-N_{k}(n_{k}+\max_{x\in\Theta_{k}}p(x,n_{k},\frac{\epsilon}{4}))\int\varphi d\mu_{\rho(k)}\right|\\
\leq&\left|\sum_{j=1}^{N_{k}}S_{n_{k}}\varphi(T^{a_{j}}p_{k})-N_{k}n_{k}\int\varphi d\mu_{\rho(k)}\right|
+N_{k}\max_{x\in\Theta_{k}}p(x,n_{k},\frac{\epsilon}{4})(\|\varphi\|+\int\varphi d\mu_{\rho(k)})\\
\leq&\sum_{j=1}^{N_{k}}\left|S_{n_{k}}\varphi(T^{a_{j}}p_{k})-S_{n_{k}}\varphi(x_{j}^{k})\right|+
\sum_{j=1}^{N_{k}}\left|S_{n_{k}}\varphi(x_{j}^{k})-n_{k}\int\varphi d\mu_{\rho(k)}\right|\\
+&N_{k}\max_{x\in\Theta_{k}}p(x,n_{k},\frac{\epsilon}{4})(\|\varphi\|+\int\varphi d\mu_{\rho(k)})\\
\leq&n_{k}N_{k}\{\text{Var}(\varphi,\frac{\epsilon}{4})+\delta_{k}\}
+N_{k}\max_{x\in\Theta_{k}}p(x,n_{k},\frac{\epsilon}{4})\{\|\varphi\|+\int\varphi d\mu_{\rho(k)}\}.
\end{align*}
Since $\text{Var}(\varphi,\epsilon)<\frac{\delta}{4}$, then for sufficiently large $k$, we have
\begin{align*}
\left|\frac{S_{N_{k}(n_{k}+\max\limits_{x\in\Theta_{k}}p(x,n_{k},\epsilon/4))}\varphi(p_{k})}{N_{k}(n_{k}+\max\limits_{x\in\Theta_{k}}p(x,n_{k},\epsilon/4))}
-\int\varphi d\mu_{\rho(k)}\right|\leq\frac{\delta}{2}.
\end{align*}
One can readily verify that $\frac{N_{k}(n_{k}+\max\limits_{x\in\Theta_{k}}p(x,n_{k},\epsilon/4))}{t_{k}}\to1$. Thus for sufficiently large $k$, we have $\left|\frac{N_{k}(n_{k}+\max\limits_{x\in\Theta_{k}}p(x,n_{k},\epsilon/4))}{t_{k}}-1\right|
\leq\frac{\delta}{4\|\varphi\|}$. We have
\begin{align*}
&\left|\frac{1}{t_{k}}S_{t_{k}}\varphi(p)-
\frac{S_{N_{k}(n_{k}+\max\limits_{x\in\Theta_{k}}p(x,n_{k},\epsilon/4))}\varphi(p_{k})}{N_{k}(n_{k}+\max\limits_{x\in\Theta_{k}}p(x,n_{k},\epsilon/4))}\right|\\
\leq&\left|\frac{1}{t_{k}}S_{t_{k}-N_{k}(n_{k}+\max\limits_{x\in\Theta_{k}}p(x,n_{k},\epsilon/4))}\varphi(p)\right|+\\
&\left|\frac{S_{N_{k}(n_{k}+\max\limits_{x\in\Theta_{k}}p(x,n_{k},\epsilon/4))}\varphi(p_{k})}{N_{k}(n_{k}+\max\limits_{x\in\Theta_{k}}p(x,n_{k},\epsilon/4))}
(\frac{N_{k}(n_{k}+\max\limits_{x\in\Theta_{k}}p(x,n_{k},\epsilon/4))}{t_{k}}-1)\right|\\
\leq&\frac{\delta}{2}.
\end{align*}
Since for sufficiently large $k$,
\begin{align*}
\left|\frac{1}{t_k}\sum\limits_{i=0}^{t_k-1}\varphi(T^i p)-\int\varphi d\mu_{\rho(k)}\right|\leq\delta
<\frac{\left|\int\varphi d\mu_1-\int\varphi d\mu_2\right|}{4},
\end{align*}
the desired results follows.
\end{proof}
%%%%%%%%%%%%%%%%%%%%%%%%%%%%%%%%%%%%%%%%%%%%%%%%%%%%%%%%%%%%%%%%%%%%%%%%%
%%%%%%%%%%%%%%%%%%%%%%%%%%%%%%%%%%%%%%%%%%%%%%%%%%%%%%%%%%%%%%%%%%%%%%%%%
%%%%%%%%%%%%%%%%%%%%%%%%%%%%%%%%%%%%%%%%%%%%%%%%%%%%%%%%%%%%%%%%%%%%%%%%%
%%%%%%%%%%%%%%%%%%%%%%%%%%%%%%%%%%%%%%%%%%%%%%%%%%%%%%%%%%%%%%%%%%%%%%%%%
\subsubsection{Construction of a Special Sequence
of Measures $\mu_k$}
For each $\underline{x}=(\underline{x}_{1},\cdots,\underline{x}_{k})\in \Theta_{1}^{N_{1}}\times\cdots\times \Theta_{k}^{N_{k}}$,
we choose one point $z=z(\underline{x})$  such that $z\in B(\underline{x}_{1},\cdots,\underline{x}_{k})$. Let $L_k$
be the set of all points constructed in this way. The following lemma shows that
$\#L_k=\#\Theta_{1}^{N_{1}}\times\cdots\times\#\Theta_{k}^{N_{k}}$.

\begin{lem}
Let $\underline{x}$ and $\underline{y}$ be distinct elements of $\Theta_{1}^{N_{1}}\times\cdots\times \Theta_{k}^{N_{k}}$.
Then $z_1=z(\underline{x})$ and $z_2=z(\underline{y})$ are $(t_k,3\epsilon)$ separated points.
\end{lem}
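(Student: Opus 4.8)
The plan is to show that any two distinct $\underline{x} = (\underline{x}_1,\dots,\underline{x}_k)$ and $\underline{y} = (\underline{y}_1,\dots,\underline{y}_k)$ in $\Theta_1^{N_1}\times\cdots\times\Theta_k^{N_k}$ differ in at least one coordinate block, and that this difference forces the chosen representatives $z_1 = z(\underline{x})$ and $z_2 = z(\underline{y})$ to separate by $3\epsilon$ along some iterate within the first $t_k$ steps. First I would locate the first index pair $(i,j)$ at which the coordinates disagree, say $x_j^i \neq y_j^i$ with $x_j^i, y_j^i \in \Theta_i$. Since $\Theta_i$ is an $(n_i,4\epsilon)$ separated set for $X_k\cap Y_k$, there exists some $0\le s \le n_i - 1$ with $d(T^s x_j^i, T^s y_j^i) \ge 4\epsilon$. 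The key bookkeeping step is to identify the exact iterate $a$ of $z_1$ (respectively $z_2$) at which the orbit is shadowing $x_j^i$ (respectively $y_j^i$): by the definition of $B(\underline{x}_1,\dots,\underline{x}_k)$ this is $a = \sum_{l=0}^{i-1} N_l(n_l + \max_{x\in\Theta_l} p(x,n_l,\epsilon/4)) + (j-1)(n_i + \max_{x\in\Theta_i} p(x,n_i,\epsilon/4))$, and one checks $a + n_i \le t_k$ so the relevant iterate $a+s$ lies in the range $\{0,\dots,t_k-1\}$ governing $(t_k,3\epsilon)$ separation.

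Next I would invoke the shadowing: $z_1 \in B(\underline{x}_1,\dots,\underline{x}_k)$ means $T^a z_1 \in \overline{B_{n_i}}(x_j^i,\epsilon/4)$, hence $d(T^{a+s} z_1, T^s x_j^i) \le \epsilon/4$; similarly $d(T^{a+s} z_2, T^s y_j^i) \le \epsilon/4$. Combining with $d(T^s x_j^i, T^s y_j^i) \ge 4\epsilon$ via the triangle inequality gives
\begin{align*}
d(T^{a+s} z_1, T^{a+s} z_2) \ge d(T^s x_j^i, T^s y_j^i) - d(T^{a+s}z_1, T^s x_j^i) - d(T^{a+s}z_2, T^s y_j^i) \ge 4\epsilon - \tfrac{\epsilon}{4} - \tfrac{\epsilon}{4} > 3\epsilon,
\end{align*}
so $z_1$ and $z_2$ are $(t_k,3\epsilon)$ separated, as claimed.

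The main obstacle I anticipate is purely combinatorial rather than analytic: carefully justifying that the "first disagreement" iterate $a+s$ is genuinely below $t_k$, which requires unwinding the nested index structure of $B(\underline{x}_1,\dots,\underline{x}_k)$ and using that $t_k = \sum_{i=0}^{k} N_i(n_i + \max_{x\in\Theta_i} p(x,n_i,\epsilon/4))$ dominates every partial block offset plus $n_i$. One subtlety to handle with care is the passage from open Bowen balls $B_{n_i}(x_j^i,\epsilon/4)$ to their closures $\overline{B_{n_i}}(x_j^i,\epsilon/4)$ in the definition of $F_k$ and the points $z(\underline{x})$: on the closure one only gets $d(T^{a+s}z_1, T^s x_j^i) \le \epsilon/4$ rather than a strict inequality, but since we budget $4\epsilon - \epsilon/2 > 3\epsilon$ strictly, the $\ge 4\epsilon$ from separation still yields strict $> 3\epsilon$ (and if one wants $\ge 3\epsilon$ suffices, it is immediate). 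No new machinery beyond the non-uniform specification property and the separation property of $\Theta_i$ is needed.
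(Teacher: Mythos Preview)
Your proposal is correct and follows essentially the same approach as the paper: locate an index pair $(i,j)$ with $x_j^i\neq y_j^i$, use that $\Theta_i$ is $(n_i,4\epsilon)$ separated, and apply the triangle inequality together with the $\epsilon/4$-shadowing of $z_1,z_2$ at the offset $a=t_{i-1}+(j-1)(n_i+\max_{x\in\Theta_i}p(x,n_i,\epsilon/4))$ to obtain $d_{t_k}(z_1,z_2)>3\epsilon$. Note that $z(\underline{x})$ is chosen in the open set $B(\underline{x}_1,\dots,\underline{x}_k)$ (not its closure), so your caution about non-strict inequalities is unnecessary, though harmless.
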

\begin{proof}
Since $\underline{x}\neq\underline{y}$, there exists $i,j$, such that $x_{j}^{i}\neq y_{j}^{i}$. We have
\begin{align*}
d_{n_{i}}(x_{j}^{i},T^{t_{i-1}+(j-1)(n_{i}+\max\limits_{x\in \Theta_{i}}p(x,n_{i},\frac{\epsilon}{4}))}z_{1})<\frac{\epsilon}{4},\\
d_{n_{i}}(y_{j}^{i},T^{t_{i-1}+(j-1)(n_{i}+\max\limits_{x\in \Theta_{i}}p(x,n_{i},\frac{\epsilon}{4}))}z_{2})<\frac{\epsilon}{4}.
\end{align*}
Together with $d_{n_{i}}(x_{j}^{i},y_{j}^{i})>4\epsilon$, we have
\begin{align*}
d_{t_{k}}(z_{1},z_{2})\geq&d_{n_{i}}(T^{t_{i-1}+(j-1)(n_{i}+\max\limits_{x\in \Theta_{i}}p(x,n_{i},\frac{\epsilon}{4}))}z_{1},
T^{t_{i-1}+(j-1)(n_{i}+\max\limits_{x\in \Theta_{i}}p(x,n_{i},\frac{\epsilon}{4}))}z_{2})\\
\geq&4\epsilon-\frac{\epsilon}{4}-\frac{\epsilon}{4}>3\epsilon.
\end{align*}
\end{proof}

We now define the measures on $F$ which yield the required estimates
for the pressure distribution principle. For each $z\in L_k,$ we
associate a number $\EuScript{L}_k(z)\in (0,\infty).$ Using these
numbers as weights, we define, for each $k,$ an atomic measure
centered on $L_k.$ Precisely, if
$z=z(\underline{x}_1,\cdots,\underline{x}_k),$ we define
\begin{align*}
\EuScript{L}_k(z):=\mathcal{L}(\underline{x}_1)\cdots\mathcal{L}(\underline{x}_k),
\end{align*}
where if
$\underline{x}_i=(x_1^i,\cdots,x^i_{N_i})\in\Theta_{i}^{N_i},$
then
\begin{align*}
\mathcal{L}(\underline{x}_i):=\prod\limits_{l=1}^{N_i}\exp
S_{n_i}\psi(x^i_{l}).
\end{align*}
We define $\nu_k:=\sum\limits_{z\in L_k}\delta_z\EuScript{L}_k(z).$
We normalize $\nu_k$ to obtain a sequence of probability measures
$\mu_k.$ More precisely, we let $\mu_k:=\frac{1}{\kappa_k}\nu_k,$
where $\kappa_k$ is the normalizing constant
$\kappa_k:=\sum\limits_{z\in L_k}\EuScript{L}_k(z)
=\sum\limits_{\underline{x}_1\in\Theta_1^{N_1}}\cdots
\sum\limits_{\underline{x}_k\in\Theta_k^{N_k}}\mathcal{L}(\underline{x}_1)\cdots\mathcal{L}(\underline{x}_k)
=M_1^{N_1}\cdots M_k^{N_k}.$ In order to prove the main result of this article, we
present some lemmas.

\begin{lem}
Suppose $\nu$ is a limit measure of the sequence of probability
measures $\mu_k.$ Then $\nu(F)=1.$
\end{lem}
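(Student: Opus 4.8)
The plan is to exploit the nested structure of the compact sets $F_k$ together with the Portmanteau theorem, so that the statement reduces to a soft measure-theoretic argument. The first step is to record that the atomic measure $\mu_m$ is supported on the finite set $L_m$, and that $L_m\subset F_k$ whenever $m\geq k$. Indeed, a point of $L_m$ has the form $z=z(\underline{x}_1,\cdots,\underline{x}_m)\in B(\underline{x}_1,\cdots,\underline{x}_m)$ for some $(\underline{x}_1,\cdots,\underline{x}_m)\in\Theta_1^{N_1}\times\cdots\times\Theta_m^{N_m}$; its truncation $(\underline{x}_1,\cdots,\underline{x}_k)$ lies in $\Theta_1^{N_1}\times\cdots\times\Theta_k^{N_k}$, and inspecting the definition of $B(\cdot)$ one sees that the intersection defining $B(\underline{x}_1,\cdots,\underline{x}_m)$ contains, among its first $k$ blocks, exactly the constraints defining $B(\underline{x}_1,\cdots,\underline{x}_k)$ with the same time shifts $\sum_{l=0}^{i-1}N_l(n_l+\max_{x\in\Theta_l}p(x,n_l,\epsilon/4))+(j-1)(n_i+\max_{x\in\Theta_i}p(x,n_i,\epsilon/4))$ (these depend only on $i,j$ and on $l<i\leq k$, not on $m$). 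Hence $B(\underline{x}_1,\cdots,\underline{x}_m)\subset B(\underline{x}_1,\cdots,\underline{x}_k)\subset\overline{B(\underline{x}_1,\cdots,\underline{x}_k)}\subset F_k$, so $z\in F_k$ and therefore $\mu_m(F_k)=1$ for all $m\geq k$.

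Next, since $\nu$ is a limit measure of $(\mu_k)$, it is the weak* limit of some subsequence $(\mu_{k_j})$, and each $F_k$ is compact, hence closed. The Portmanteau theorem then gives $\nu(F_k)\geq\limsup_{j\to\infty}\mu_{k_j}(F_k)$; as $\mu_{k_j}(F_k)=1$ for every $j$ with $k_j\geq k$, we conclude $\nu(F_k)=1$ for every $k\in\mathbb{N}$.

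Finally, because $F_{k+1}\subset F_k$ and $\nu$ is a probability measure, continuity from above yields $\nu(F)=\nu\bigl(\bigcap_{k=1}^{\infty}F_k\bigr)=\lim_{k\to\infty}\nu(F_k)=1$, as claimed.

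The only genuinely delicate point is the nesting $L_m\subset F_k$ in the first step: one must verify that selecting a point from $B(\underline{x}_1,\cdots,\underline{x}_m)$ automatically places it in $\overline{B(\underline{x}_1,\cdots,\underline{x}_k)}$ for every $k\leq m$. This is purely a matter of checking that the shift exponents coding the first $k$ blocks of the longer product coincide with those of the shorter one; no appeal to the shadowing property is needed here, since the points $z$ were already chosen inside the relevant intersections. Once this bookkeeping is in place, the remaining steps are the standard Portmanteau-plus-continuity argument.
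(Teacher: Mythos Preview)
Your proof is correct and follows essentially the same approach as the paper's own proof. The paper argues more tersely, writing simply that $\mu_{l+p}(F_l)=1$ because $F_{l+p}\subset F_l$, then invokes upper semicontinuity on closed sets and continuity from above; your version supplies the extra bookkeeping showing $L_m\subset F_k$ directly via the nesting $B(\underline{x}_1,\cdots,\underline{x}_m)\subset B(\underline{x}_1,\cdots,\underline{x}_k)$, which is exactly the content behind the paper's one-line justification.
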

\begin{proof}
Suppose $\nu=\lim_{k\to\infty}\mu_{l_k}$ for $l_k\to\infty$. For any fixed $l$ and all
$p\geq0$, $\mu_{l+p}(F_l)=1$ since $F_{l+p}\subset F_l$. Thus, $\nu(F_l)\geq\limsup_{k\to\infty}\mu_{l_k}(F_l)=1$.
It follows that $\nu(F)=\lim_{l\to\infty}\nu(F_l)=1$.
\end{proof}

Let $\EuScript{B}=B_n(q,\epsilon/2)$ be an arbitrary ball which
intersects $F.$ Let $k$ be the unique number which satisfies
$t_k\leq n<t_{k+1}.$
Let $j\in\{0,\cdots,N_{k+1}-1\}$ be the unique number so
\begin{align*}
t_k+(n_{k+1}+\max\limits_{x\in\Theta_{k+1}}p(x,n_{k+1},\frac{\epsilon}{4}))j\leq
n<t_k+(n_{k+1}+\max\limits_{x\in\Theta_{k+1}}p(x,n_{k+1},\frac{\epsilon}{4}))(j+1).
\end{align*}
We assume that $j\geq1$ and  the simpler case $j=0$  is similar.

\begin{lem}\label{lem2.11}
For any $p\geq1,$ we have
\begin{align*}
&\mu_{k+p}(\EuScript{B})\\
&\leq\frac{1}{\kappa_kM^j_{k+1}}\exp\Bigg\{S_n\psi(q)+2nVar(\psi,\epsilon)+\|\psi\|(\sum\limits_{i=1}^{k}N_{i}\max\limits_{x\in
\Theta_i}p(x,n_i,\frac{\epsilon}{4})\\&
+j\max\limits_{x\in\Theta_{k+1}}p(x,n_{k+1},\frac{\epsilon}{4})
+n_{k+1}+\max\limits_{x\in\Theta_{k+1}}p(x,n_{k+1},\frac{\epsilon}{4}))\Bigg\}.
\end{align*}
\end{lem}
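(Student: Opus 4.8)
The plan is to estimate $\mu_{k+p}(\EuScript{B})$ by counting how many atoms $z\in L_{k+p}$ can lie in the ball $\EuScript{B}=B_n(q,\epsilon/2)$ and bounding the weight $\EuScript{L}_{k+p}(z)$ each such atom contributes. First I would use Lemma 2.6: any two atoms of $L_{k+p}$ corresponding to distinct prefixes $(\underline{x}_1,\dots,\underline{x}_{k+1})\in\Theta_1^{N_1}\times\cdots\times\Theta_{k+1}^{N_{k+1}}$ are $(t_{k+1},3\epsilon)$-separated, hence a fortiori $(n,\epsilon/2)$-separated once we restrict to the first $n\le t_{k+1}$ coordinates where the relevant separation already occurs. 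Actually the cleaner statement is that the portion of the orbit up to time $n$ already pins down $\underline{x}_1,\dots,\underline{x}_k$ completely and the first $j$ blocks $x_1^{k+1},\dots,x_j^{k+1}$ of level $k+1$; so all atoms of $L_{k+p}$ meeting $\EuScript{B}$ share this common initial data, and the number of them is at most $\#\Theta_{k+1}^{N_{k+1}-j}\cdot\#\Theta_{k+2}^{N_{k+2}}\cdots\#\Theta_{k+p}^{N_{k+p}}$, while each carries weight at most (the common prefix weight) times $M_{k+1}^{N_{k+1}-j}M_{k+2}^{N_{k+2}}\cdots M_{k+p}^{N_{k+p}}$ — wait, that overcounts; I should sum the weights rather than bound the count, so that the tail factors $M_{k+1}^{N_{k+1}-j}\cdots M_{k+p}^{N_{k+p}}$ cancel against exactly the same factors appearing in $\kappa_{k+p}=M_1^{N_1}\cdots M_{k+p}^{N_{k+p}}$.

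After that cancellation, $\mu_{k+p}(\EuScript{B})\le \dfrac{1}{\kappa_k M_{k+1}^{j}}\sum\mathcal L(\text{common prefix of the first }j\text{ blocks of level }k+1)$, where the sum is over the at most one choice of prefix compatible with $\EuScript{B}$ — but being careful: different global prefixes could still be compatible if $n$ falls strictly inside a block, so the real content is that the ball $B_n(q,\epsilon/2)$ forces the $(n_i,4\epsilon)$-separated representatives to be uniquely determined up to the one partially-traversed block. This is where the $2n\,\mathrm{Var}(\psi,\epsilon)$ term enters: the common prefix weight is $\prod \exp S_{n_i}\psi(x_l^i)$ over the fully traversed blocks, and along $B_n(q,\epsilon/2)$ each such Birkhoff sum differs from the corresponding piece of $S_n\psi(q)$ by at most the total variation accumulated over those iterates, which is bounded by $n\,\mathrm{Var}(\psi,\epsilon)$; the extra factor of $2$ and the $\|\psi\|$-terms absorb (i) the ``gap'' iterates of length $\sum_{i\le k}N_i\max_{\Theta_i}p+j\max_{\Theta_{k+1}}p$ during which $z$ is shadowing nothing prescribed, and (ii) the final partial block of length at most $n_{k+1}+\max_{\Theta_{k+1}}p(x,n_{k+1},\epsilon/4)$ which may or may not be counted. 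Collecting: the prefix weight $\le \exp\{S_n\psi(q)+2n\,\mathrm{Var}(\psi,\epsilon)+\|\psi\|(\sum_{i=1}^k N_i\max_{\Theta_i}p+j\max_{\Theta_{k+1}}p+n_{k+1}+\max_{\Theta_{k+1}}p)\}$, which is exactly the claimed bound.

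The main obstacle I anticipate is the careful bookkeeping of which blocks are ``fully inside'' $[0,n)$ versus straddling the endpoint $n$, and correspondingly making sure the telescoping of the $M_i$ factors is exact rather than off by one block. Concretely, one must verify that if $z(\underline{x})\in \EuScript{B}$ then the defining shadowing relation forces $x_l^i$ (for $i\le k$, all $l$) and $x_1^{k+1},\dots,x_j^{k+1}$ to be determined: if two admissible choices differed in one of these slots, Lemma 2.6's computation (using $d_{n_i}(x_l^i,y_l^i)>4\epsilon$ against two $\epsilon/4$-shadowing bounds) would give a $3\epsilon$ separation within the first $n$ coordinates, contradicting that both $z(\underline{x})$ and $q$ lie within $\epsilon/2$ in the $d_n$ metric — here one uses that the partial $j$-th-and-earlier blocks of level $k+1$ all terminate at or before time $t_k+(n_{k+1}+\max_{\Theta_{k+1}}p)j\le n$. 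Once uniqueness of the prefix is established, summing $\nu_{k+p}$ over atoms with that prefix gives precisely $\mathcal L(\text{prefix})\cdot M_{k+1}^{N_{k+1}-j}\prod_{i=k+2}^{k+p}M_i^{N_i}$, dividing by $\kappa_{k+p}$ cancels all tail factors and leaves $\mathcal L(\text{prefix})/(\kappa_k M_{k+1}^{j})$, and the variation/norm estimates above finish it. The $\limsup_{p}$ in Lemma \ref{lem2.11}'s conclusion then follows since the bound is uniform in $p$.
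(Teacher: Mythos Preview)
Your proposal is correct and follows essentially the same route as the paper: show that membership in $\EuScript{B}=B_n(q,\epsilon/2)$ pins down $(\underline{x}_1,\dots,\underline{x}_k)$ and the first $j$ entries of $\underline{x}_{k+1}$ (via the $4\epsilon$-separation of each $\Theta_i$ against two $\epsilon/4$-shadowings, exactly as in Lemma~2.5), sum $\nu_{k+p}$ over atoms with that fixed prefix to get $\mathcal{L}(\underline{x}_1)\cdots\mathcal{L}(\underline{x}_k)\prod_{l=1}^j\exp S_{n_{k+1}}\psi(x_l^{k+1})\cdot M_{k+1}^{N_{k+1}-j}\cdots M_{k+p}^{N_{k+p}}$, divide by $\kappa_{k+p}$ to cancel the tail, and finish with the variation/$\|\psi\|$ bookkeeping. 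One small clarification: the factor $2$ in $2n\,\mathrm{Var}(\psi,\epsilon)$ does not come from the gaps---it arises because the paper compares in two steps, first $S_{n_i}\psi(x_l^i)$ to the corresponding piece of $S_n\psi(z)$ (shadowing at scale $\epsilon/4$) and then $S_n\psi(z)$ to $S_n\psi(q)$ (since $d_n(z,q)<\epsilon$); the gap iterates and the overshoot of the last partial block are handled entirely by the $\|\psi\|$ terms, just as you wrote.
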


\begin{proof}
Case $p=1.$ Suppose $\mu_{k+1}(\EuScript{B})>0,$ then $L_{k+1}\cap\EuScript{B}\neq\emptyset$.
Let $z=z(\underline{x},\underline{x}_{k+1})\in L_{k+1}\cap\EuScript{B}$, where $\underline{x}=(\underline{x}_1,\cdots,\underline{x}_k)\in\Theta_{1}^{N_{1}}\times\cdots\times\Theta_{k}^{N_{k}}$
and $\underline{x}_{k+1}\in\Theta_{k+1}^{N_{k+1}}$. Let
\begin{align*}
\EuScript{A}_{\underline{x};x_{1},\cdots,x_{j}}=\{z(\underline{x},y_{1},\cdots,y_{N_{k+1}})\in L_{k+1}:x_{1}=y_{1},\cdots,x_{j}=y_{j}\}.
\end{align*}

Suppose that $z^{\prime}=z(\underline{y},\underline{y}_{k+1})\in L_{k+1}\cap\EuScript{B}$. Since $d_{n}(z,z^{\prime})<\epsilon$,
we have $\underline{y}=\underline{x}$ and $x_{l}=y_{l}$ for $l\in\{1,\cdots,j\}$. We show that $x_l=y_l$ for $l\in\{1,2,\cdots,j\}$
and the proof that $\underline{x}=\underline{y}$ is similar. Suppose that $x_l\neq y_l$ and
let $a_l=t_k+(l-1)(n_{k+1}+\max\limits_{x\in \Theta_{k+1}}p(x,n_{k+1},\frac{\epsilon}{4}))$. Then
\begin{align*}
d_{n_{k+1}}(T^{a_l}z,x_l)<\frac{\epsilon}{4} \text{ and }  d_{n_{k+1}}(T^{a_l}z^\prime,y_l)<\frac{\epsilon}{4}.
\end{align*}
Since $d_{n_{k+1}}(x_l,y_l)>4\epsilon$, we have
\begin{align*}
d_n(z,z^\prime)&\geq d_{n_{k+1}}(T^{a_l}z,T^{a_l}z^\prime)\\
&\geq d_{n_{k+1}}(x_l,y_l)-d_{n_{k+1}}(T^{a_l}z,x_l)-d_{n_{k+1}}(T^{a_l}z^\prime,y_l)>3\epsilon,
\end{align*}
which is a contradicition. Thus we have
\begin{align*}
\nu_{k+1}(\EuScript{B})\leq\sum_{z\in\EuScript{A}_{\underline{x};x_{1},\cdots,x_{j}}}\EuScript{L}_{k+1}(z)=
\mathcal{L}(\underline{x}_1)\cdots\mathcal{L}(\underline{x}_k)\prod\limits_{l=1}^j\exp
S_{n_{k+1}}\psi(x_{l}^{k+1})M_{k+1}^{N_{k+1}-j},
\end{align*}
Case $p>1.$ Similarly,
\begin{align*}
\nu_{k+p}(\EuScript{B})\leq\mathcal{L}(\underline{x}_1)\cdots\mathcal{L}(\underline{x}_k)\prod\limits_{l=1}^j\exp
S_{n_{k+1}}\psi(x_{l}^{k+1})M_{k+1}^{N_{k+1}-j}M_{k+2}^{N_{k+2}}\cdots
M_{k+p}^{N_{k+p}}.
\end{align*}
Since for all $i\in\{1,\cdots,k\}$ and all $l\in\{1,\cdots,N_i\}$,
\begin{align*}
d_{n_i}(T^{t_{i-1}+(l-1)(n_{i}+\max\limits_{x\in \Theta_{i}}p(x,n_{i},\frac{\epsilon}{4}))}z,x_l^i)<\frac{\epsilon}{4},
\end{align*}
we have
\begin{align*}
\mathcal{L}(\underline{x}_1)\cdots\mathcal{L}(\underline{x}_k)\leq\exp\left\{S_{t_k}\psi(z)+t_k\text{Var}(\psi,\epsilon)
+\sum_{i=1}^kN_i\max_{x\in \Theta_{i}}p(x,n_{i},\frac{\epsilon}{4})\|\psi\|\right\}.
\end{align*}
Similarly,
\begin{align*}
\prod_{l=1}^j\exp S_{n_{k+1}}(x_l^{k+1})&\leq\exp\Bigg\{S_{n-t_k}\psi(T^{t_k}z)+(n-t_k)\text{Var}(\psi,\epsilon)\\
&+j\max_{x\in \Theta_{k+1}}p(x,n_{k+1},\frac{\epsilon}{4})\|\psi\|+(n_{k+1}+\max_{x\in \Theta_{k+1}}p(x,n_{k+1},\frac{\epsilon}{4}))\|\psi\|\Bigg\}.
\end{align*}
Combining with the fact $d_n(z,q)<\epsilon$, we obtain
\begin{align*}
&\mathcal{L}(\underline{x}_1)\cdots\mathcal{L}(\underline{x}_k)\prod\limits_{l=1}^j\exp S_{n_{k+1}}\psi({x_{l}^{k+1}})\\
&\leq\exp\Bigg\{S_n\psi(q)+2nVar(\psi,\epsilon)
+\|\psi\|(\sum\limits_{i=1}^{k}N_{i}\max\limits_{x\in
\Theta_i}p(x,n_i,\frac{\epsilon}{4})\\
&+j\max\limits_{x\in\Theta_{k+1}}p(x,n_{k+1},\frac{\epsilon}{4})
+n_{k+1}+\max\limits_{x\in\Theta_{k+1}}p(x,n_{k+1},\frac{\epsilon}{4}))\Bigg\}.
\end{align*}
Since $\mu_{k+p}=\frac{1}{\kappa_{k+p}}\nu_{k+p}$ and $\kappa_{k+p}=\kappa_{k}M_{k+1}^{N_{k+1}}\cdots M_{k+p}^{N_{k+p}}$,
the desired result follows.
\end{proof}

\begin{lem}
For sufficiently large $n,$
\begin{align*}
\limsup\limits_{l\to\infty}\mu_l(B_n(q,\epsilon/2))\leq\exp\Bigg\{-n(\mathbf{C}-2\text{Var}(\psi,\epsilon)-7\gamma)+S_n\psi(q)\Bigg\}.
\end{align*}
\end{lem}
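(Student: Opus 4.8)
The plan is to combine Lemma \ref{lem2.11} with the normalization constant $\kappa_k = M_1^{N_1}\cdots M_k^{N_k}$ and the asymptotic growth conditions (2.3)--(2.4) on the $N_k$, so that all the ``error terms'' coming from the time lags $p(x,n_i,\epsilon/4)$ become negligible compared with $n$. The key point is that for a ball $B_n(q,\epsilon/2)$ with $t_k \le n < t_{k+1}$, Lemma \ref{lem2.11} gives an upper bound for $\mu_{k+p}(\EuScript{B})$ that is uniform in $p\ge 1$, so taking $\limsup_{l\to\infty}$ reduces to estimating that single expression.

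The main computation is to show that $\frac{1}{\kappa_k M_{k+1}^j}$ contributes (up to sub-exponential corrections) a factor $\exp(-n(\mathbf{C}-4\gamma))$. Here I would use that $M_i \ge \exp(n_i(\mathbf{C}-4\gamma))$ by Lemma 2.3, hence
\begin{align*}
\kappa_k M_{k+1}^j \ge \exp\!\left((\mathbf{C}-4\gamma)\Big(\sum_{i=1}^k N_i n_i + j\, n_{k+1}\Big)\right),
\end{align*}
and then observe that $\sum_{i=1}^k N_i n_i + j n_{k+1}$ differs from $n$ only by the cumulative time-lag terms $\sum_{i=1}^k N_i \max_{x\in\Theta_i} p(x,n_i,\epsilon/4) + j \max_{x\in\Theta_{k+1}} p(x,n_{k+1},\epsilon/4)$ plus the single block length $n_{k+1} + \max_{x\in\Theta_{k+1}}p(x,n_{k+1},\epsilon/4)$. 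By the choice of $N_k$ via (2.3) and (2.4), together with $n_{k+1}+\max_{x\in\Theta_{k+1}}p/N_k \to 0$, each of these is $o(n)$ as $n\to\infty$ (note $n\ge t_k \ge N_k n_k \to \infty$). Thus for $n$ large, $\sum_{i=1}^k N_i n_i + j n_{k+1} \ge n(1-\gamma')$ for an arbitrarily small $\gamma'$, which (absorbing $\gamma'$ into the $\gamma$'s, using $|\mathbf{C}-4\gamma| \le \|\psi\| + h_{top}$ or simply that the finiteness of $P(X,\psi)$ bounds $\mathbf{C}$) yields the factor $\exp(-n(\mathbf{C}-4\gamma) + o(n))$. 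The same error terms control the $\|\psi\|(\cdots)$ summand inside the exponential in Lemma \ref{lem2.11}, making it $o(n)$ as well.

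Collecting everything, the exponent in the bound for $\limsup_l \mu_l(B_n(q,\epsilon/2))$ is
\begin{align*}
S_n\psi(q) + 2n\,\text{Var}(\psi,\epsilon) - n(\mathbf{C}-4\gamma) + o(n),
\end{align*}
and choosing $n$ large enough that the $o(n)$ terms are below $3\gamma n$ gives the claimed bound $\exp\{-n(\mathbf{C} - 2\text{Var}(\psi,\epsilon) - 7\gamma) + S_n\psi(q)\}$. I expect the main obstacle to be bookkeeping: carefully matching the index $k$ (determined by $t_k \le n < t_{k+1}$) with the growth conditions so that the ratio $(\text{time-lag sums})/n$ genuinely tends to $0$ uniformly, and making sure the ``leftover'' Birkhoff-sum piece of length $n - t_k - j(n_{k+1}+\max p)$, which is at most one block $n_{k+1}+\max_{x\in\Theta_{k+1}}p$, is also $o(n)$ — this is exactly why condition (2.3) is imposed. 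No single estimate is deep; the care is in verifying all error terms are simultaneously sub-linear in $n$.
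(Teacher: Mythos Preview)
Your proposal is correct and follows essentially the same route as the paper: bound $\kappa_k M_{k+1}^j$ from below via Lemma 2.3, split the discrepancy between $\sum_i N_i n_i + j n_{k+1}$ and $n$ into the cumulative time-lag sum and the leftover block of length at most $n_{k+1}+\max_{x\in\Theta_{k+1}}p(x,n_{k+1},\epsilon/4)$, and use (2.3)--(2.4) together with $\max_{x\in\Theta_k}p(x,n_k,\epsilon/4)/n_k<2^{-k}$ to make both $o(n)$. The paper organizes the same computation by naming the two error terms $D$ and $E$ and bounding $|D/n|,|E/n|<\gamma$ separately, then absorbing the $\|\psi\|(\cdots)$ term from Lemma~\ref{lem2.11} as a further $\gamma n$; your $o(n)<3\gamma n$ packaging is equivalent.
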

\begin{proof}
From lemma 2.3, we have
\begin{align*}
\kappa_{k}M_{k+1}^j=&M_1^{N_1}\cdots M_k^{N_k}M_{k+1}^j\\
\geq&\exp\{(\mathbf{C}-4\gamma)(N_1n_1+N_2n_2+\cdots+N_kn_k+jn_{k+1})\}\\
=&\exp\{(\mathbf{C}-4\gamma)(N_1(n_1+\max\limits_{x\in\Theta_1}p(x,n_1,\frac{\epsilon}{4}))
+N_2(n_2+\max\limits_{x\in\Theta_2}p(x,n_2,\frac{\epsilon}{4}))+\cdots\\
&+N_k(n_k+\max\limits_{x\in\Theta_k}p(x,n_k,\frac{\epsilon}{4}))+j(n_{k+1}+\max\limits_{x\in\Theta_{k+1}}p(x,n_{k+1},\frac{\epsilon}{4})))-D\}\\
=&\exp\{(\mathbf{C}-4\gamma)n-D-E\},
\end{align*}
where
$$
E=(\mathbf{C}-4\gamma)(n-t_k-j\max\limits_{x\in\Theta_{k+1}}p(x,n_{k+1},\frac{\epsilon}{4})),$$
$$D=(\mathbf{C}-4\gamma)(N_1\max\limits_{x\in\Theta_1}p(x,n_1,\frac{\epsilon}{4}))
+\cdots+N_k\max\limits_{x\in\Theta_k}p(x,n_k,\frac{\epsilon}{4})+j\max\limits_{x\in\Theta_{k+1}}p(x,n_{k+1},\frac{\epsilon}{4})).
$$
Recall that for all $k$, $\max\limits_{x\in\Theta_k}p(x,n_k,\epsilon/4)/n_k<1/2^k$. Hence from (2.3) and (2.4) we obtain for sufficiently large $n$,
$|D/n|<\gamma$, $|E/n|<\gamma$.

By lemma 2.7, for sufficiently large $n$ and any $p\geq1$, we have
\begin{align*}
\mu_{k+p}(\EuScript{B})
&\leq\frac{1}{\kappa_kM_{k+1}^j}\exp\Bigg\{S_{n}\psi(q)+n(2\text{Var}(\psi,\epsilon)+\gamma)\Bigg\}\\
&\leq\exp\Bigg\{-n(\mathbf{C}-2\text{Var}(\psi,\epsilon)-7\gamma)+S_n\psi(q)\Bigg\}.
\end{align*}
Hence the desired results follows.
\end{proof}

Applying the generalized pressure distribution principle, we have
\begin{align*}
P(\widehat{X}(\varphi,T),\psi)\geq P(F,\psi,\epsilon)\geq\mathbf{C}-2\text{Var}(\psi,\epsilon)-7\gamma.
\end{align*}
Recall that $\text{Var}(\psi,\epsilon)<\gamma$, we have
\begin{align*}
P(\widehat{X}(\varphi,T),\psi)\geq P(F,\psi,\epsilon)\geq\mathbf{C}-9\gamma.
\end{align*}
Since $\gamma$ and $\epsilon$ were arbitrary, the proof of theorem 1.2 is complete.
%%%%%%%%%%%%%%%%%%%%%%%%%%%%%%%%%%%%%%%%%%%%%%%%%%%%%%%%%%%%%%%%%%%%%%%%%
%%%%%%%%%%%%%%%%%%%%%%%%%%%%%%%%%%%%%%%%%%%%%%%%%%%%%%%%%%%%%%%%%%%%%%%%%
%%%%%%%%%%%%%%%%%%%%%%%%%%%%%%%%%%%%%%%%%%%%%%%%%%%%%%%%%%%%%%%%%%%%%%%%%
%%%%%%%%%%%%%%%%%%%%%%%%%%%%%%%%%%%%%%%%%%%%%%%%%%%%%%%%%%%%%%%%%%%%%%%%%
\section{Modification to obtain theorem 1.1}
Fix a small $\gamma>0.$ Let $\mu_1\in E(K,T)$ and satisfes $h_{\mu_1}+\int\psi d\mu_1>\text{\bf C}-\gamma/2.$ Let $\nu\in E(K,T)$ satisfies $\int\varphi d\mu_1\neq
\int\varphi d\nu.$ Let $\mu_2=t_1\mu_1+t_2\nu$ where $t_1+t_2=1$ and $t_1\in(0,1)$ is chosen sufficiently close to 1 so that $h_{\mu_2}+\int\psi d\mu_2>\text{\bf C}-\gamma.$ Obviously, $\int\varphi d\mu_1\neq\int\varphi d\mu_2$. Choose $\delta>0$ sufficiently small so
\begin{align*}
\left|\int\varphi d\mu_1-\int\varphi d\mu_2\right|>4\delta.
\end{align*}
Let $\epsilon>0$. Choose a strictly decreasing sequence $\delta_k\to0$ with $\delta_1<\delta.$ For $k$ odd, we choose a strictly increasing sequence $l_k\to\infty$ so the set
\begin{align*}
Y_k:=\left\{x\in K:\left|\frac{1}{n}S_n\varphi(x)-\int\varphi d\mu_1\right|<\delta_k,
\frac{p(x,n,\epsilon/4)}{n}<\frac{1}{2^k} \text{ for all } n\geq l_k\right\}
\end{align*}
satisfies $\mu_1(Y_k)>1-\gamma$ for every $k.$ For $k$ even, we let $Y_{k,1}:=Y_{k-1}$ and find $l_k>l_{k-1}$ so that each of the sets
\begin{align*}
Y_{k,2}:=\left\{x\in K:\left|\frac{1}{n}S_n\varphi(x)-\int\varphi d\nu\right|<\delta_k,
\frac{p(x,n,\epsilon/4)}{n}<\frac{1}{2^k} \text{ for all } n\geq l_k\right\}
\end{align*}
satisfies $\nu(Y_{k,2})>1-\gamma.$ The proof of the following lemma is similar to that of lemma 2.3.

\begin{lem}
For any small sufficiently $\epsilon>0$ and $k$ even, we can find a sequence $\widehat{n}_k\to\infty$ so $[t_i\widehat{n}_k]\geq l_k$ for $i=1,2$ and sets
$\Theta_k^i$ so that  $\Theta_k^i$  is a $([t_i\widehat{n}_k],4\epsilon)$ separated set for $Y_{k,i}$ with $M_k^i:=\sum\limits_{x\in \Theta_k^i}\exp\Big\{\sum\limits_{j=0}^{[t_{i}\widehat{n}_k]-1}\psi(T^jx)\Big\}$ satisfying
\begin{align*}
M_k^1&\geq\exp\left([t_1\widehat{n}_k](h_{\mu_1}+\int\psi d\mu_1-4\gamma)\right),\\
M_k^2&\geq\exp\left([t_2\widehat{n}_k](h_{\nu}+\int\psi d\nu-4\gamma)\right).
\end{align*}
Then for any $x\in\Theta_{k}^i$, $\frac{p(x,[t_i n_{k}],\epsilon/4)}{[t_i n_{k}]}<\frac{1}{2^{k}}$, where $i=1,2$.
\end{lem}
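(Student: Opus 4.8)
The plan is to mirror the proof of Lemma~2.3, but to split each Birkhoff time into two blocks of lengths $[t_1\widehat n_k]$ and $[t_2\widehat n_k]$, so that the two ergodic measures $\mu_1$ and $\nu$ can be handled on separate factors. Concretely, I would first invoke Lemma~2.1 for the ergodic measures $\mu_1$ and $\nu$ with error $\gamma$: choosing $\epsilon>0$ small enough, $\liminf_{n\to\infty}\frac1n\log N^{\mu_1}(\psi,\gamma,4\epsilon,n)\geq h_{\mu_1}+\int\psi\,d\mu_1$ and likewise for $\nu$. Then, exactly as before, since $\mu_1(Y_{k,1})=\mu_1(Y_{k-1})>1-\gamma$ and $\nu(Y_{k,2})>1-\gamma$, any $(m,4\epsilon)$-separated set of maximal $\psi$-weight for $Y_{k,i}$ has weight at least $N^{\mu_1}(\psi,\gamma,4\epsilon,m)$ (resp. $N^{\nu}$). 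Writing $M^i(k,m):=P_m(Y_{k,i},\psi,4\epsilon)$, one gets $\liminf_{m\to\infty}\frac1m\log M^1(k,m)\geq h_{\mu_1}+\int\psi\,d\mu_1$ and $\liminf_{m\to\infty}\frac1m\log M^2(k,m)\geq h_{\nu}+\int\psi\,d\nu$.

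Next I would pick the sequence $\widehat n_k\to\infty$ (for $k$ even) growing slowly enough that simultaneously $[t_i\widehat n_k]\geq l_k$ for $i=1,2$, and both
\begin{align*}
\frac{1}{[t_1\widehat n_k]}\log M^1(k,[t_1\widehat n_k])\geq h_{\mu_1}+\int\psi\,d\mu_1-3\gamma,\qquad
\frac{1}{[t_2\widehat n_k]}\log M^2(k,[t_2\widehat n_k])\geq h_{\nu}+\int\psi\,d\nu-3\gamma
\end{align*}
hold; this is legitimate because for each fixed $k$ the two $\liminf$ bounds above say the desired inequality holds for all large $m$, and $[t_i\widehat n_k]\to\infty$ as $\widehat n_k\to\infty$. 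Then for each $i$ I choose a $([t_i\widehat n_k],4\epsilon)$-separated set $\Theta_k^i$ for $Y_{k,i}$ realizing the sum $M_k^i$ up to a further factor $e^{-\gamma[t_i\widehat n_k]}$, which yields $M_k^1\geq\exp([t_1\widehat n_k](h_{\mu_1}+\int\psi\,d\mu_1-4\gamma))$ and $M_k^2\geq\exp([t_2\widehat n_k](h_{\nu}+\int\psi\,d\nu-4\gamma))$, as claimed. The time-lag control $\frac{p(x,[t_i\widehat n_k],\epsilon/4)}{[t_i\widehat n_k]}<\frac1{2^k}$ for $x\in\Theta_k^i\subset Y_{k,i}$ is immediate from the definition of $Y_{k,i}$ once $[t_i\widehat n_k]\geq l_k$, since membership in $Y_{k,i}$ forces $p(x,n,\epsilon/4)/n<1/2^k$ for all $n\geq l_k$.

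The main point requiring a little care — and the analogue of the only non-bookkeeping step in Lemma~2.3 — is the joint choice of $\widehat n_k$: one must verify that a single integer $\widehat n_k$ can be found so that all four conditions ($[t_i\widehat n_k]\geq l_k$ and the two logarithmic lower bounds) hold at once. This follows because each condition excludes only finitely many values of $\widehat n_k$ (for the logarithmic bounds, by the $\liminf$ statements applied at the scale $m=[t_i\widehat n_k]$, noting $[t_i\widehat n_k]\to\infty$ monotonically in $\widehat n_k$), so all sufficiently large $\widehat n_k$ work; one then also arranges $\widehat n_k$ increasing so that the later fast-growth conditions (2.3)--(2.4) on the $N_k$'s can still be met downstream. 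Everything else is a verbatim repetition of the separated-set / spanning-set comparison used in the proof of Lemma~2.3, now applied on the two factors $[t_1\widehat n_k]$ and $[t_2\widehat n_k]$ separately; hence I would simply write ``the proof is identical to that of Lemma~2.3, applied to $Y_{k,1}$ and $Y_{k,2}$ with block lengths $[t_1\widehat n_k]$ and $[t_2\widehat n_k]$ respectively,'' and record the resulting inequalities.
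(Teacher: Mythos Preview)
Your proposal is correct and matches the paper's approach exactly: the paper's own proof is the single sentence ``The proof of the following lemma is similar to that of lemma 2.3,'' and you have filled in those details in the natural way, applying Lemma~2.1 and the separated/spanning comparison to $Y_{k,1}$ and $Y_{k,2}$ separately at the block lengths $[t_1\widehat n_k]$ and $[t_2\widehat n_k]$, with the joint choice of $\widehat n_k$ justified by the $\liminf$ bounds. (One cosmetic remark: for $i=1$, since $Y_{k,1}=Y_{k-1}$, the definition literally gives $p(x,n,\epsilon/4)/n<1/2^{k-1}$ rather than $1/2^{k}$; this is a slip in the paper's statement and is harmless for the rest of the argument.)
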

For $k$ even, let
\begin{align*}
\Theta_k=\Theta_k^1\times\Theta_k^2, M_k=M_k^1 M_k^2 \text{ and }
n_k=[t_1\widehat{n}_k]+\max_{x\in \Theta_k^1}p(x,[t_1\widehat{n}_k],\frac{\epsilon}{4})+[t_2\widehat{n}_k].
\end{align*}
Then $n_k/\widehat{n}_k\to1$.
For $k$ odd, the corresponding ingredients are obtained by lemma 2.3. Given our new construction of $\Theta_k$, the rest of our construction goes through unchanged.
For example, let
$$\underline{x}_1=(x_1^1,x_2^1,\cdots,x_{N_1}^1)\in\Theta_1^{N_1},$$
$$\underline{x}_2=((x_1^{2,1},x_1^{2,2}),(x_2^{2,1},x_2^{2,2}),\cdots,(x_{N_2}^{2,1},x_{N_2}^{2,2}))\in(\Theta_2^1\times\Theta_2^2)^{N_2}.$$
Then for $(\underline{x}_1,\underline{x}_2)\in\Theta_1^{N_1}\times\Theta_2^{N_2}$, by the non-uniform specification property, we have
\begin{align*}
B(\underline{x}_1)=\bigcap_{j=1}^{N_1} T^{-(j-1)(n_{1}+\max\limits_{x\in \Theta_{1}}p(x,n_{1},\frac{\epsilon}{4}))}B_{n_1}(x_j^1,\frac{\epsilon}{4})\neq\emptyset.
\end{align*}
Let $t_1=N_1(n_1+\max_{x\in \Theta_{1}}p(x,n_{1},\frac{\epsilon}{4}))$, then
\begin{align*}
B(\underline{x}_1,\underline{x}_2)=&\bigcap_{j=1}^{N_1} T^{-(j-1)(n_{1}+\max\limits_{x\in \Theta_{1}}p(x,n_{1},\frac{\epsilon}{4}))}
B_{n_1}(x_j^1,\frac{\epsilon}{4})\cap \\
&T^{-t_1}B_{[t_1\widehat{n}_2]}(x_1^{2,1},\frac{\epsilon}{4})\cap
T^{-t_1-[t_1\widehat{n}_2]-\max\limits_{x\in \Theta_2^1}p(x,[t_1\widehat{n}_2],\frac{\epsilon}{4})}B_{[t_2\widehat{n}_2]}(x_1^{2,2},\frac{\epsilon}{4})\cap\cdots\cap\\
&T^{-t_1-(N_2-1)(n_{2}+\max\limits_{x\in \Theta_2^2}p(x,[t_2\widehat{n}_2],\frac{\epsilon}{4}))}B_{[t_1\widehat{n}_2]}(x_{N_2}^{2,1},\frac{\epsilon}{4})\cap\\
&T^{-t_1-(N_2-1)(n_{2}+\max\limits_{x\in \Theta_2^2}p(x,[t_2\widehat{n}_2],\frac{\epsilon}{4}))-[t_1\widehat{n}_2]-
\max\limits_{x\in \Theta_2^1}p(x,[t_1\widehat{n}_2],\frac{\epsilon}{4})}
B_{[t_2\widehat{n}_2]}(x_{N_2}^{2,2},\frac{\epsilon}{4})\neq\emptyset.
\end{align*}

For $k$ even, let $\underline{x}_k=((x_1^{k,1},x_1^{k,2}),(x_2^{k,1},x_2^{k,2}),\cdots,(x_{N_k}^{k,1},x_{N_k}^{k,2}))\in\Theta_k^{N_k}=(\Theta_k^1\times\Theta_k^2)^{N_k}$.
Then for $j\in\{1,\cdots,N_k\}$, we have
\begin{align*}
&\left|S_{[t_1\widehat{n}_k]}\varphi(x_j^{k,1})+S_{[t_2\widehat{n}_k]}\varphi(x_j^{k,2})-n_k\int\varphi d\mu_2\right|\\
\leq&\left|S_{[t_1\widehat{n}_k]}\varphi(x_j^{k,1})+S_{[t_2\widehat{n}_k]}\varphi(x_j^{k,2})-\widehat{n}_k\int\varphi d\mu_2\right|
+\left|\widehat{n}_k\int\varphi d\mu_2-n_k\int\varphi d\mu_2\right|\\
\leq&\left|S_{[t_1\widehat{n}_k]}\varphi(x_j^{k,1})-[t_1\widehat{n}_k]\int\varphi d\mu_1\right|
+\left|S_{[t_2\widehat{n}_k]}\varphi(x_j^{k,2})-[t_2\widehat{n}_k]\int\varphi d\nu\right|+2\|\varphi\|\\
+&\left|\widehat{n}_k\int\varphi d\mu_2-n_k\int\varphi d\mu_2\right|.
\end{align*}
It follows that
\begin{align*}
\left|\frac{S_{[t_1\widehat{n}_k]}\varphi(x_j^{k,1})+S_{[t_2\widehat{n}_k]}\varphi(x_j^{k,2})}{n_k}-\int\varphi d\mu_2\right|\to0.
\end{align*}
Thus we can modify the proof of lemma 2.4 to ensure that our construction still gives rise to points in $\widehat{X}(\varphi,T)$.
Obviously $n_k/\widehat{n}_k\to1$ and $[t_i\widehat{n}_k]/t_i\widehat{n}_k\to1$ for $i=1,2$. Hence for sufficiently large $k$, we have
\begin{align*}
M_k\geq&\exp\{[t_1\widehat{n}_k](h_{\mu_1}+\int\psi d\mu_1-4\gamma)+[t_2\widehat{n}_k](h_\nu+\int\psi d\nu-4\gamma)\}\\
\geq&\exp\{(1-\gamma)\widehat{n}_k(t_1(h_{\mu_1}+\int\psi d\mu_1)+t_2(h_\nu+\int\psi d\mu_1)-4\gamma)\}\\
\geq&\exp(1-\gamma)^2 n_k(h_{\mu_2}+\int\psi d\mu_2-4\gamma)\geq\exp(1-\gamma)^2n_k(\mathbf{C}-5\gamma).
\end{align*}
Our arrival at the second line and the third line is because we are able to add in the extra terms with an arbitrarily small
change to the constant $s$. Since $\gamma$ was arbitrary, we can modify the estimates in lemma 2.8 to cover this more general construction.
%%%%%%%%%%%%%%%%%%%%%%%%%%%%%%%%%%%%%%%%%%%%%%%%%%%%%%%%%%%%%%%%%%%%%%%%%%%%%%%%%%
%%%%%%%%%%%%%%%%%%%%%%%%%%%%%%%%%%%%%%%%%%%%%%%%%%%%%%%%%%%%%%%%%%%%%%%%%%%%%%%%%%
%%%%%%%%%%%%%%%%%%%%%%%%%%%%%%%%%%%%%%%%%%%%%%%%%%%%%%%%%%%%%%%%%%%%%%%%%%%%%%%%%%
%%%%%%%%%%%%%%%%%%%%%%%%%%%%%%%%%%%%%%%%%%%%%%%%%%%%%%%%%%%%%%%%%%%%%%%%%%%%%%%%%%
\section{Some Applications}
In this section, by the work of Paulo Varandas \cite{Var}, Theorem 1.1 can be applied  to multidimensional local
diffeomorphisms and Viana maps. The BS dimension (introduced by Barreira and  Schmeling) of multifractal decomposition set is also studied in this section.

{\bf Example 1 Mltidimensional local diffeomorphisms} Let $T_0$ be
an expanding map in $\mathbb{T}^n$ and take a periodic point $p$ for
$T_0.$ Let $T$ be a $C^1$-local diffeomorphism obtained from $T_0$
by a bifurcation in a small neighborhood $U$ of $p$ in such a way
that:

(1) every point $x\in\mathbb{T}^n$ has some preimage outside $U$;

(2) $\|DT(x)^{-1}\|\leq\sigma^{-1}$ for every $x\in
\mathbb{T}^n\setminus U,$ and $\|DT(x)^{-1}\|\leq L$ for every
$x\in\mathbb{T}^n$ where $\sigma>1$ is large enough or $L>0$ is
sufficiently close to 1;

(3) $T$ is topologically exact: for every open set $U$ there is
$N\geq1$ for which $T^N(U)=\mathbb{T}^n$.

From \cite{Var} and \cite{VarVia}, we know $T$ has a unique
(ergodic) equilibrium $m$ for H\"{o}lder continuous
potential $-\log|det DT|$ and $(T,m) $ satisfies non-uniform specification property.

\begin{cro}
Let $m$ be the unique ergodic equilibrium state for H\"{o}lder continuous
potential $-\log|det DT|$ in multidimensional local
diffeomorphisms. If $\varphi\in C(\mathbb{T}^n)$ satisfies $\inf\limits_{\mu\in M(K,T)}\int\varphi d\mu
<\sup\limits_{\mu\in M(K,T)}\int\varphi d\mu$, then
\begin{align*}
P(\widehat{\mathbb{T}^n}(\varphi,T),-\log|det DT|)=P(\mathbb{T}^n,-\log|det DT|).
\end{align*}
If $\psi>0,$ then
\begin{align*}
BS(\widehat{\mathbb{T}^n}(\varphi,T),-\log|det DT|)=\sup\left\{h_\nu/\int\psi d\nu:\nu\in M(K,T)\right\}.
\end{align*}
\end{cro}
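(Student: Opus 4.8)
The plan is to apply Theorem 1.1 directly with the potential $\psi = -\log|\det DT|$, and then convert the pressure statement into the BS-dimension statement by the standard device relating the BS dimension to a zero of a pressure function. First I would check that the hypotheses of Theorem 1.1 are met: by the cited results of Varandas and Varandas--Viana, $(T,m)$ satisfies the non-uniform specification property, and $m$ is the unique equilibrium state for the H\"older potential $-\log|\det DT|$; in particular $m\in M(K,T)$ since the shadowing property holds on the full-measure set $K$. The gap condition $\inf_{\mu\in M(K,T)}\int\varphi\,d\mu < \sup_{\mu\in M(K,T)}\int\varphi\,d\mu$ is exactly the hypothesis imposed on $\varphi$. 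Hence Theorem 1.1 gives $\widehat{\mathbb{T}^n}(\varphi,T)\neq\emptyset$ and, because $m$ is an equilibrium state for $\psi=-\log|\det DT|$ lying in $M(K,T)$, the equality $P(\widehat{\mathbb{T}^n}(\varphi,T),-\log|\det DT|) = P(\mathbb{T}^n,-\log|\det DT|)$ follows immediately. This settles the first displayed equality with essentially no work beyond quoting the right references.

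For the second equality I would recall the definition of the BS dimension: for a positive potential $\psi>0$ and a set $Z$, $BS(Z,\psi)$ is the unique $\alpha$ such that $P(Z,-\alpha\psi)=0$ (this is the Barreira--Schmeling dimension; one checks $t\mapsto P(Z,-t\psi)$ is continuous and strictly decreasing when $\psi>0$, so the root is well defined). Here the relevant potential is $\psi = -\log|\det DT|$ played against itself, so I would look at the function $t\mapsto P(\widehat{\mathbb{T}^n}(\varphi,T),\,t\cdot(-\log|\det DT|))$ — wait, more precisely, with the convention in the statement where the second argument of $BS$ is $-\log|\det DT|$, the defining equation is $P(\widehat{\mathbb{T}^n}(\varphi,T),\,-\alpha\log|\det DT|)=0$, i.e. we look for the zero of $t\mapsto P(\widehat{\mathbb{T}^n}(\varphi,T), t(-\log|\det DT|))$. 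The strategy is to apply the \emph{full} strength of Theorem 1.1 with the varying potential $\psi_t := t\,(-\log|\det DT|)$: Theorem 1.1 yields
\begin{align*}
P(\widehat{\mathbb{T}^n}(\varphi,T),\psi_t)\geq \sup\left\{h_\nu + \int \psi_t\,d\nu : \nu\in M(K,T)\right\} = \sup\left\{h_\nu - t\int\log|\det DT|\,d\nu : \nu\in M(K,T)\right\},
\end{align*}
and in fact equality holds at the parameter values where an equilibrium state for $\psi_t$ sits in $M(K,T)$; combined with monotonicity in $Z$ one gets $P(\widehat{\mathbb{T}^n}(\varphi,T),\psi_t)\le P(\mathbb{T}^n,\psi_t)$. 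Setting the pressure to zero and rearranging, the zero $\alpha$ satisfies $\sup\{h_\nu - \alpha\int\log|\det DT|\,d\nu:\nu\in M(K,T)\}=0$, i.e. $\alpha = \sup\{h_\nu / \int\log|\det DT|\,d\nu:\nu\in M(K,T)\} = \sup\{h_\nu/\int\psi\,d\nu:\nu\in M(K,T)\}$ (using $\psi>0$ so the denominators are positive and bounded away from $0$). That is precisely the claimed formula.

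The main obstacle is the second half, and specifically two points of care. First, to run Theorem 1.1 with $\psi_t = -t\log|\det DT|$ one must know that for the relevant parameter $t=\alpha$ there exists a measure in $M(K,T)$ that is an equilibrium state for $\psi_t$ — one needs the equilibrium-state theory of Varandas to supply this (for H\"older potentials the equilibrium state is unique and, being the relevant SRB-like measure, lives on $K$), so the argument should quote that uniqueness/existence rather than reprove it. Second, one must justify interchanging the "sup of linear functionals equals zero" with the "ratio" formula; this is routine but requires $\int\log|\det DT|\,d\nu>0$ uniformly (which holds because $T$ is expanding on the complement of the small bifurcation neighbourhood and the sup/inf over invariant measures of this integral is strictly positive), plus the continuity and strict monotonicity in $t$ of $t\mapsto P(\cdot,\psi_t)$ to guarantee the zero is unique. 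I would also remark that $\widehat{\mathbb{T}^n}(\varphi,T)\neq\emptyset$ is needed for $BS$ to be meaningfully defined, and this was already granted by Theorem 1.1. Everything else — the passage from the pressure equality to the variational formula — is bookkeeping.
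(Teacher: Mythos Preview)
The paper gives no explicit proof of this corollary; it is stated as an immediate application of Theorem~1.1 once one invokes the references \cite{Var} and \cite{VarVia} to know that $(T,m)$ satisfies non-uniform specification and that $m$ is the unique ergodic equilibrium state for $-\log|\det DT|$ (hence $m\in M(K,T)$). Your derivation of the first displayed equality is exactly this argument and matches the paper's intended reasoning.

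For the BS-dimension part, your Bowen-equation approach (identify the dimension as the zero of $t\mapsto P(\widehat{\mathbb{T}^n}(\varphi,T),-t\psi)$ and feed Theorem~1.1 into this) is the natural route, and the paper offers nothing more detailed. However, two remarks are in order. First, the corollary as printed appears to contain a misprint: the left side carries $-\log|\det DT|$ while the right side involves a generic $\psi$; comparing with Corollary~4.2, where both sides use $\psi$ and only the inequality $\geq$ is asserted, strongly suggests the intended statement is $BS(\widehat{\mathbb{T}^n}(\varphi,T),\psi)$ on the left. Second, and more substantively, your own caveat is the real issue: Theorem~1.1 with potential $-t\psi$ yields only
\[
P(\widehat{\mathbb{T}^n}(\varphi,T),-t\psi)\ \geq\ \sup_{\nu\in M(K,T)}\Bigl\{h_\nu - t\!\int\psi\,d\nu\Bigr\},
\]
which gives the lower bound $BS(\widehat{\mathbb{T}^n}(\varphi,T),\psi)\geq \sup_{\nu\in M(K,T)} h_\nu/\!\int\psi\,d\nu$ cleanly. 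To upgrade this to an equality you need, for the parameter $t$ equal to the claimed supremum, that $P(\mathbb{T}^n,-t\psi)\le 0$, i.e.\ that the full variational principle is saturated within $M(K,T)$ at that scale. The cited equilibrium-state theory guarantees this for the specific H\"older potential $-\log|\det DT|$ (and its scalar multiples), but not for a general continuous $\psi>0$; so as written the equality is not justified for arbitrary $\psi$, and indeed Corollary~4.2 claims only $\geq$ in the analogous situation. Your instinct to flag this dependence on the equilibrium-state references is correct; the paper simply does not address it.
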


{\bf Example 2  Viana maps}
In \cite{Via}, the author introduced Viana maps which are obtained as $C^3$ small perturbations of
the skew product $\phi_\alpha$ of the cylinder $S^1\times I$ given by
\begin{align*}
\phi_\alpha(\theta,x)=(d\theta({\rm mod}1),1-ax^2+\alpha
\cos(2\pi\theta))
\end{align*}
where $d\geq16$ is an integer, $a$ is a Misiurewicz parameter for
the quadratic family, and $\alpha$ is small.

Paulo Varandas \cite{Var} proved that when $m$ is SRB measure for $\phi_\alpha$,
then $(\phi_\alpha,m)$ satisfies non-uniform specification.

\begin{cro}
If $m$ is  SRB measure for  a Viana map $\phi_\alpha$ and $\varphi\in C(S^1\times I)$ satisfies $\inf\limits_{\mu\in M(K,\phi_\alpha)}\int\varphi d\mu
<\sup\limits_{\mu\in M(K,\phi_\alpha)}\int\varphi d\mu$, then for any $\psi\in C(S^1\times I)$
\begin{align*}
P(\widehat{S^1\times I}(\varphi,\phi_\alpha),\psi)\geq&\sup\left\{h_\nu+\int\psi d\nu:\nu\in M(K,\phi_\alpha)\right\}.
\end{align*}
If $\psi>0,$ then
\begin{align*}
BS(\widehat{S^1\times I}(\varphi,\phi_\alpha),\psi)\geq&\sup\left\{h_\nu/\int\psi d\nu:\nu\in M(K,\phi_\alpha)\right\}.
\end{align*}
If there exists $\mu\in M(K,T)$ such that $\mu$ is a equilibrium state for $\phi_\alpha$ with respect to potential $\psi\in C(X)$, then we have
$P(\widehat{S^1\times I}(\varphi,\phi_\alpha),\psi)=P(S^1\times I,\psi)$.
\end{cro}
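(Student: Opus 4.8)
The plan is to deduce this corollary directly from Theorem 1.1 together with Varandas's verification of the non-uniform specification property for Viana maps. First I would recall from \cite{Var} that when $m$ is the SRB measure of $\phi_\alpha$, the pair $(\phi_\alpha,m)$ satisfies the non-uniform specification property of Definition 1.1, with the shadowing taking place on a full $m$-measure $\phi_\alpha$-invariant set $K$; in particular $m\in M(K,\phi_\alpha)$, so $M(K,\phi_\alpha)\neq\emptyset$ and the suprema on the right-hand sides are over a nonempty set. Under the standing hypothesis $\inf_{\mu\in M(K,\phi_\alpha)}\int\varphi d\mu<\sup_{\mu\in M(K,\phi_\alpha)}\int\varphi d\mu$, all hypotheses of Theorem 1.1 hold with $(X,T)=(S^1\times I,\phi_\alpha)$, so Theorem 1.1 immediately yields $\widehat{S^1\times I}(\varphi,\phi_\alpha)\neq\emptyset$ and, for every $\psi\in C(S^1\times I)$,
\[
P(\widehat{S^1\times I}(\varphi,\phi_\alpha),\psi)\geq\sup\left\{h_\nu+\int\psi d\nu:\nu\in M(K,\phi_\alpha)\right\},
\]
which is the first assertion. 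The last assertion is also immediate from the final clause of Theorem 1.1: if some $\mu\in M(K,\phi_\alpha)$ is an equilibrium state for $\psi$, then combining the classical variational principle $P(S^1\times I,\psi)=\sup\{h_\nu+\int\psi d\nu:\nu\in M(S^1\times I,\phi_\alpha)\}$ with the monotonicity of pressure (property (1) in the excerpt) and the lower bound just stated gives $P(\widehat{S^1\times I}(\varphi,\phi_\alpha),\psi)=P(S^1\times I,\psi)$.

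For the BS-dimension statement I would invoke the standard fact that, for a strictly positive $\psi\in C(S^1\times I)$, the map $s\mapsto P(Z,-s\psi)$ is strictly decreasing and $BS(Z,\psi)$ is its unique zero, equivalently $BS(Z,\psi)=\sup\{s\geq0:P(Z,-s\psi)\geq0\}$ (this follows by unwinding Definition 1.2 applied to the potential $-s\psi$). Applying the pressure bound above with $-s\psi$ in place of $\psi$: for each $\nu\in M(K,\phi_\alpha)$ and each $s$ with $0\le s<h_\nu/\int\psi d\nu$ (note $\int\psi d\nu>0$ since $\psi>0$ and $\nu$ is a probability measure),
\[
P(\widehat{S^1\times I}(\varphi,\phi_\alpha),-s\psi)\geq h_\nu-s\int\psi d\nu>0,
\]
hence $BS(\widehat{S^1\times I}(\varphi,\phi_\alpha),\psi)\geq s$. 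Letting $s\uparrow h_\nu/\int\psi d\nu$ and then taking the supremum over $\nu\in M(K,\phi_\alpha)$ gives
\[
BS(\widehat{S^1\times I}(\varphi,\phi_\alpha),\psi)\geq\sup\left\{h_\nu/\int\psi d\nu:\nu\in M(K,\phi_\alpha)\right\},
\]
as required.

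The substantive content of the corollary is carried entirely by Theorem 1.1, so the only genuine points to check are bookkeeping ones: that Varandas's specification property for $\phi_\alpha$ is literally of the form demanded in Definition 1.1 (including the time-lag decay $\limsup_{n}p(x,n,\epsilon)/n=0$ on the shadowing set $K$ and the monotonicity of $p$ in $\epsilon$), and that the Carathéodory formalism of Definition 1.2 yields the root characterization of $BS(Z,\psi)$ and its monotonicity in $s$ verbatim. Both are routine once the definitions are unwound, so I expect no real obstacle; the proof amounts to recording these verifications and invoking Theorem 1.1 with the potentials $\psi$ and $-s\psi$.
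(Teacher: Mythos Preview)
Your proposal is correct and follows essentially the same approach as the paper: the corollary is stated there without proof, as an immediate application of Theorem~1.1 once Varandas's result from \cite{Var} guarantees that $(\phi_\alpha,m)$ satisfies the non-uniform specification property. Your handling of the BS-dimension claim via the root characterization $BS(Z,\psi)=\sup\{s\geq0:P(Z,-s\psi)\geq0\}$ and the substitution $\psi\mapsto -s\psi$ in Theorem~1.1 is exactly the standard route and is what the paper implicitly intends.
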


%%%%%%%%%%%%%%%%%%%%%%%%%%%%%%%%%%%%%%%%%%%%%%%%%%%%%%%%%%%%%%%%%%%%%%%%%
%%%%%%%%%%%%%%%%%%%%%%%%%%%%%%%%%%%%%%%%%%%%%%%%%%%%%%%%%%%%%%%%%%%%%%%%%
%%%%%%%%%%%%%%%%%%%%%%%%%%%%%%%%%%%%%%%%%%%%%%%%%%%%%%%%%%%%%%%%%%%%%%%%%
%%%%%%%%%%%%%%%%%%%%%%%%%%%%%%%%%%%%%%%%%%%%%%%%%%%%%%%%%%%%%%%%%%%%%%%%%
\noindent {\bf Acknowledgements.}   The authors would like to thank the anonymous referees for carefully reading our paper and providing suggestions
 for improvement. The work was supported by the
National Natural Science Foundation of China (grant No. 11271191)
and National Basic Research Program of China (grant No.
2013CB834100) and  the Fundamental Research Funds for the Central Universities (grant No. WK0010000035).
%%%%%%%%%%%%%%%%%%%%%%%%%%%%%%%%%%%%%%%%%%%%%%%%

\end{document}